\documentclass[preprint,12pt]{elsarticle}

\usepackage{amsmath, amssymb, amsthm}
\usepackage{mathrsfs}
\usepackage{dsfont}
\usepackage{graphicx}
\usepackage{booktabs}
\usepackage{xcolor}
\definecolor{capaccent}{HTML}{765A86}
\usepackage{tikz}
\usepackage{float}
\usetikzlibrary{positioning, decorations.pathreplacing, calc, shapes.geometric}
\usepackage[margin=2.5cm]{geometry}
\usepackage{hyperref}
\usepackage{xurl}
\newtheorem{theorem}{Theorem}[section]
\newtheorem{proposition}[theorem]{Proposition}

\newtheorem{lemma}[theorem]{Lemma}

\newtheorem{conjecture}{Conjecture}[section]

\numberwithin{equation}{section}
\numberwithin{figure}{section}

\begin{document}

\begin{frontmatter}

\title{Asymptotic maximum spectral radius of nonregular graphs}

\author{Hangxi Cha}
\ead{2433949@tongji.edu.cn}
\author{Haiying Shan\corref{cor1}}
\ead{shan_haiying@tongji.edu.cn}
\affiliation{organization={
School of Mathematical Sciences, Key Laboratory of Intelligent Computing and Applications\\ (Ministry of Education), Tongji University},
            city={Shanghai},
            postcode={200092},
            country={P.R. China}}
\cortext[cor1]{Corresponding author}
\nonumnote{This work is partially supported by the National Natural Science Foundation of China (No. 12271182).}

\begin{abstract}
Let $\lambda_1(n,\Delta)$ be the maximum adjacency spectral radius among connected
nonregular simple graphs of order $n$ and maximum degree $\Delta$.
Using effective resistance bounds, explicit comparison graphs and a
one-dimensional Wirtinger inequality, we prove that, for every fixed integer
$\Delta\ge3$, $\lambda_1(n,\Delta)=\Delta-c_\Delta\pi^2/(4n^2)+O_\Delta(n^{-5/2})$
as $n\to\infty$,
where $c_\Delta=\Delta-1$ for odd $\Delta$ and $c_\Delta=2(\Delta-2)$ for even $\Delta$.
This proves the asymptotic conjecture posed by Liu
[J.\ Combin.\ Theory Ser.\ B 169 (2024), Conjecture 7.1].
\end{abstract}

\begin{keyword}
spectral radius \sep nonregular graph \sep maximum degree \sep effective resistance

\MSC[2020] 05C50 \sep 05C35
\end{keyword}

\end{frontmatter}

\section{Introduction}\label{sec:introduction}

The adjacency spectral radius $\lambda_1(G)$ of a connected graph is at most its maximum
degree, with equality if and only if the graph is regular.
For a connected nonregular graph of order $n$ and maximum degree $\Delta$,
Stevanovi\'c~\cite{Stevanovic2004} obtained an early lower bound for
$\Delta-\lambda_1(G)$. Subsequent bounds incorporated the diameter and the number of
edges~\cite{Cioaba2007,CGN2007}. In particular, Cioab\u{a} proved that
$\Delta-\lambda_1(G)>1/(nD)$, where $D$ is the diameter of $G$.
These estimates lead to the question of the smallest possible difference
when the order and maximum degree are prescribed.

For integers $\Delta\ge3$ and $n\ge \Delta+1$, let $\mathcal G^*(n,\Delta)$ be the set of
connected nonregular simple graphs of order $n$ and maximum degree exactly
$\Delta$. Define
\[
\lambda_1(n,\Delta)=\max\{\lambda_1(G):G\in\mathcal G^*(n,\Delta)\}.
\]
Liu, Shen and Wang~\cite{LSW2007} studied the order of magnitude of
$\Delta-\lambda_1(n,\Delta)$ and conjectured that, for every fixed integer
$\Delta\ge3$,
\[
\lim_{n\to\infty}n^2\bigl(\Delta-\lambda_1(n,\Delta)\bigr)=(\Delta-1)\pi^2.
\]
Liu~\cite{Liu2024} disproved this conjecture for every $\Delta\ge3$,
determined the limits for $\Delta=3,4$, and proposed the following conjecture.
\begin{conjecture}[Liu {\cite[Conjecture~7.1]{Liu2024}}]
For every fixed integer $\Delta\ge3$,
\[
\lim_{n\to\infty}n^2\bigl(\Delta-\lambda_1(n,\Delta)\bigr)
=\begin{cases}
(\Delta-1)\pi^2/4,&\Delta\text{ odd},\\
(\Delta-2)\pi^2/2,&\Delta\text{ even}.
\end{cases}
\]
\end{conjecture}

We prove the following quantitative version of the conjecture.
Set
\[
c_\Delta=\begin{cases}\Delta-1,&\Delta\text{ odd},\\2(\Delta-2),&\Delta\text{ even}.
\end{cases}
\]
\begin{theorem}\label{thm:main}
For every fixed integer $\Delta\ge3$,
\[
\lambda_1(n,\Delta)=\Delta-\frac{c_\Delta\pi^2}{4n^2}+O_\Delta(n^{-5/2})
\qquad(n\to\infty).
\]
\end{theorem}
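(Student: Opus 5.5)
The proof has two halves, an upper bound for $\lambda_1(n,\Delta)$ valid for every $G\in\mathcal G^*(n,\Delta)$ and a matching lower bound from explicit graphs, and both reduce to the quadratic form of $\Delta I - A$. Let $x$ be the positive unit Perron vector of $G$. Then
\[
\Delta-\lambda_1(G)=x^{\top}(\Delta I-A)x=\sum_{v}(\Delta-d_v)x_v^2+\sum_{uv\in E}(x_u-x_v)^2 .
\]
The first sum is the defect at the nonsaturated vertices $S=\{v:d_v<\Delta\}$; the second is a Dirichlet energy. The heuristic is that an extremal graph is a long ``path of blocks'' of length about $n/(\text{block size})$. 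The defect sits at one end, where $x$ is small, and $x$ behaves like $\sin(\pi t/(2L))$ along the path, with an effectively free (Neumann) condition at the far end. The factor $1/4$ comes from this quarter-wave, and $c_\Delta$ is the ratio of block size to effective conductance between consecutive blocks. For odd $\Delta$ the best blocks give $\Delta-1$. For even $\Delta$ parity forces a total degree deficiency of at least $2$, which can only be achieved by a worse chain, giving $2(\Delta-2)$.

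\textbf{Lower bound (constructions).} For odd $\Delta$ I take a chain of $m\approx n/k$ copies of a fixed gadget of $k$ vertices, consecutive gadgets joined by matchings. The gadget and the cut sizes are chosen so that the chain is $\Delta$-regular except for a single vertex of degree $\Delta-1$ at one end (possible since $n\Delta-1$ is odd only when $n$ is odd; for the other parity of $n$ I adjust the end). I use a test vector constant on each gadget and equal to $\sin(\pi j/(2m))$ on gadget $j$, corrected near the defective end. The Rayleigh quotient then gives $\Delta-\lambda_1\le c_\Delta\pi^2/(4n^2)+O(n^{-3})$; I will tune the gadget so that (block size)/(cut conductance) equals $c_\Delta$. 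For even $\Delta$ I do the same with a chain whose structure is forced by a deficiency of $2$, either two degree-$(\Delta-1)$ vertices or one of degree $\Delta-2$. This lower bound is routine once the gadgets are written down. I would take Liu's extremal families for $\Delta=3,4$ as templates and generalize them.

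\textbf{Upper bound.} This is the main obstacle. Fix $G$ and let $v_0\in S$. Let $r=\max_v x_v$, attained at some vertex $w$. There are two steps.

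\emph{Defect term.} The eigen-equation gives $(\Delta-\lambda_1)\sum_v x_v\,d_v=\sum_v (\Delta - d_v)x_v\,d_v$-type identities. Summing $\Delta x_v-\lambda x_v$ over all vertices gives $(\Delta-\lambda_1)\sum_v x_v=\sum_v(\Delta-d_v)x_v$, which bounds $x_{v_0}$ by $O(n\,\varepsilon\, r)$ with $\varepsilon=\Delta-\lambda_1$.

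\emph{Energy term.} Partition the vertices into distance layers $V_t$ from $S$. The energy is bounded below through effective resistances: a vertex at distance $t$ has effective resistance to $S$ at least a constant times $t/\Delta$. More precisely, I need the sharp one-dimensional bound $\sum_{uv}(x_u-x_v)^2\ge \sum_t (y_{t+1}-y_t)^2/R_t$, where $y_t$ is the average of $x$ on $V_t$ and $R_t$ is the resistance between layers.

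The crux is a counting lemma. Because every vertex outside $S$ has degree exactly $\Delta$, a layer of size $s$ can send at most a bounded number of edges forward relative to $s$. This lemma must show that the ``mass per unit conductance'' ratio is at least $c_\Delta$ on average, with parity entering for even $\Delta$, since an edge cut in a graph with all degrees even and total deficiency $1$ is impossible. Given that, the Wirtinger inequality on $[0,L]$ with $f(0)$ small and $f'(L)=0$ (the quarter-wave constant $\pi^2/(4L^2)$) yields $\varepsilon\ge c_\Delta\pi^2/(4n^2)$ up to error terms.

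The errors come from discretization, from the boundary value $x_{v_0}\neq0$, and from layers that deviate from the ideal chain. I expect to control them by Cauchy--Schwarz on blocks of length $\sqrt n$, which is the natural source of the $O_\Delta(n^{-5/2})$ error. The hardest part will be the counting lemma that turns local degree constraints into the sharp constant $c_\Delta$, especially its even case.
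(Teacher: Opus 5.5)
Your architecture matches the paper's: Rayleigh quotients of explicit chains for one inequality, and resistance-based energy bounds with a quarter-wave Wirtinger inequality and $\sqrt n$-scale discretization for the other. But there is a genuine gap exactly where the difficulty lies. The ``counting lemma'' that turns the degree constraints into the sharp constant $c_\Delta$ is not supplied, and the version you sketch cannot supply it.

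What is needed is a sharp \emph{upper} bound on effective resistance in terms of the number of vertices. Your remark that the resistance to $S$ is at least of order $t/\Delta$ points the wrong way, since lower bounds on resistance only bound the minimal energy from above.
\begin{itemize}
\item For odd $\Delta$ the paper imports the Abdi--Ghorbani theorem $R_G(a,b)\le n/(\Delta-1)-1$ (terminals of degree at least $\Delta-1$, other vertices of degree at least $\Delta$, equality exactly for $L_\Delta$ chains). This is a global statement; counting edges leaving distance layers does not reproduce it with the sharp constant.
\item For even $\Delta$ the paper proves $R_G(a,b)\le(n-1)/(2(\Delta-2))$ for terminal degrees $2,\Delta-2$. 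It chains copies of $G$ between two caps into a $\Delta$-regular graph and compares with the known minimum algebraic connectivity $2(\Delta-2)\pi^2/n^2+O_\Delta(n^{-3})$ of even-regular graphs.
\item A completion lemma then allows other terminal degrees (even ones when $\Delta$ is even) and parallel edges at the terminals.
\end{itemize}
Parity enters not through ``total deficiency at least $2$''. It enters because, once a ground vertex $o$ is joined to each $v$ by $\Delta-d_G(v)$ edges, all degrees are even, so every edge cut is even. Hence the level cuts are even, which is exactly the hypothesis of the even-case bound, and the bridged $L_\Delta$ structure is ruled out.

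Your decomposition also has to change. The inequality $\sum(x_u-x_v)^2\ge\sum_t(y_{t+1}-y_t)^2/R_t$, with $y_t$ the \emph{averages} of $x$ on distance layers, is false in general: the resistance bound applies only to functions that are constant on each terminal set. The paper instead does the following.
\begin{itemize}
\item It takes the Perron vector $f$ normalized by $\max f=1$, with $f_o=0$.
\item It cuts at levels $\tau_0<\cdots<\tau_K$ of $f$, chosen via $\int_0^1 b(t)\,dt=\sum_{uv}|f_u-f_v|=O_\Delta(1)$ so that the cut sizes $b(\tau_i)$ are $O_\Delta(1)$ on average.
\item It uses the clipped functions $f^{(i)}=\min\{\delta_i,\max\{0,f-\tau_{i-1}\}\}$. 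These are constant on both sides of each layer and satisfy $\mathcal E(f)\ge\sum_i\mathcal E(f^{(i)})$.
\item Wirtinger's inequality is then applied to a piecewise linear interpolant with $F(0)=0$.
\end{itemize}

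The construction half also fails as written. A test vector constant on each gadget charges only the cut edges. Its Rayleigh quotient is therefore about $ck\,\pi^2/(4n^2)$, where $c$ is the number of edges between consecutive gadgets and $k$ the gadget size. One checks that $ck\ge\Delta+1$ for every admissible gadget, so for odd $\Delta$ this never reaches $(\Delta-1)\pi^2/(4n^2)$.

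The relevant ratio is gadget size divided by the effective \emph{resistance} of a whole period; your ``size over conductance'' is inverted. For the bridged $L_\Delta$ chain this gives $(\Delta+1)/\bigl(1+2/(\Delta-1)\bigr)=\Delta-1$. To realize it you must follow $g(t)=\sin(\pi t/(2T))$ in the resistance coordinate and take the harmonic extension inside each copy. Then a copy occupying $[s,t]$ contributes exactly $(g(t)-g(s))^2/(t-s)\le\int_s^t g'(x)^2\,dx$.

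For even $\Delta$ you also have to produce the gadget. The paper uses chains of $B_{\Delta,2}=K_1+K_2+K_{\Delta-2}+K_1$ sharing terminals, with resistance $(\Delta+1)/(2(\Delta-2))$ per $\Delta+1$ vertices, closed by a deficiency-free cap at the far end.
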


In Section~\ref{sec:resistance}, we derive resistance bounds using the
resistance theorem of Abdi and Ghorbani~\cite{AGI} for odd \(\Delta\) and their
algebraic connectivity estimate~\cite{AGII} for even \(\Delta\).
We then extend these bounds to networks with more general terminal degrees.
Section~\ref{sec:proof} completes the proof of Theorem~\ref{thm:main}.
The upper bound for $\Delta-\lambda_1(n,\Delta)$ follows from Rayleigh
quotients of comparison graphs, and the lower bound from the resistance bounds,
piecewise linear interpolation and Wirtinger's inequality.

\section{Preliminaries}\label{sec:preliminaries}

Graphs are finite, undirected and simple unless stated otherwise.
Write $d_G(v)$ for the degree of $v$,
$e(G)$ for the number of edges, $A(G)$ for the adjacency matrix, and
$L_G$ for the Laplacian matrix.
For a connected graph \(G\) with at least two vertices, let \(\nu_2(G)\)
denote its algebraic connectivity,
the second-smallest eigenvalue of \(L_G\).
In a multigraph, degrees and edge counts
include multiplicity.
The vector $\mathbf1$ has all coordinates equal to one. For a real vector
$x=(x_v)_{v\in V(G)}$, write $\|x\|_2^2=\sum_{v\in V(G)}x_v^2$.
In asymptotic estimates, $\Delta$ is fixed. Subscripts in $O$-notation
indicate the parameters on which the implicit constants may depend.

For a simple graph \(G\) with maximum degree at most \(\Delta\), we call
\(\Delta-d_G(v)\) the \textit{degree deficit} of \(v\) relative to \(\Delta\).
For every \(y\in\mathbb R^{V(G)}\), we have
\begin{equation}
y^{\mathsf T}(\Delta I-A(G))y
=\sum_{uv\in E(G)}(y_u-y_v)^2
 +\sum_{v\in V(G)}(\Delta-d_G(v))y_v^2.                                \label{eq:deficit}
\end{equation}
The least eigenvalue of \(\Delta I-A(G)\) is \(\Delta-\lambda_1(G)\), so any nonzero
\(y\) gives an upper bound for \(\Delta-\lambda_1(G)\) through its
Rayleigh quotient.

In this paper, a network is a connected loopless multigraph with unit edge resistances.
Let $H$ be such a network and let $a,b \in V(H)$ be two distinct
\textit{terminals}. The remaining
vertices are termed \textit{nonterminals}. For any function
$x \in \mathbb{R}^{V(H)}$, its Dirichlet energy is defined as
\[
    \mathcal{E}_H(x) = \sum_{uv \in E(H)} (x_u - x_v)^2,
\]
where parallel edges are counted separately.  Let
$\Omega_{a,b} = \{ x \in \mathbb{R}^{V(H)} \mid x_a = 0,\, x_b = 1 \}$
be the set of functions with fixed boundary values at the terminals.
The effective resistance between $a$ and $b$ is given by
\[
R_H(a,b) = \left( \min_{x \in \Omega_{a,b}} \mathcal{E}_H(x) \right)^{-1}.
\]

A vertex function \(x\) is harmonic at a nonterminal \(v\) if
\(\sum_{vw\in E(H)}(x_v-x_w)=0\), with parallel edges counted separately.
By the maximum principle~\cite[Section~1.3.2]{DoyleSnell},
if \(x\) is harmonic at every nonterminal, then
\(\min\{x_a,x_b\}\le x_v\le\max\{x_a,x_b\}\) for every \(v\in V(H)\).

By the Dirichlet principle~\cite[Section~1.3]{DoyleSnell}, there is a unique
function \(u\) satisfying
\begin{equation}
u\in\Omega_{a,b},\qquad
\mathcal E_H(u)=\min_{x\in\Omega_{a,b}}\mathcal E_H(x)
=\frac{1}{R_H(a,b)}.                                            \label{eq:harmonic}
\end{equation}
This function is harmonic at every nonterminal, so the maximum principle
gives \(0\le u_v\le1\) for every \(v\in V(H)\).
The Dirichlet principle also gives, for every \(x\in\mathbb R^{V(H)}\),
\begin{equation}
\mathcal E_H(x)\ge\frac{(x_b-x_a)^2}{R_H(a,b)}.                     \label{eq:resistance}
\end{equation}
Let \(G\) be a connected simple graph with distinct terminals \(a,b\),
and let \(G^{(1)},\ldots,G^{(q)}\), \(q\ge1\), be disjoint copies of \(G\),
with terminals \(a_i,b_i\) in copy \(i\). Adding the edges
\(b_i a_{i+1}\), \(1\le i<q\), gives a chain with terminals \(a_1,b_q\).
Each added edge is a bridge of this chain.
We shall also use chains obtained by identifying \(b_i\) with
\(a_{i+1}\), \(1\le i<q\). In this case, denote the identified vertex by
\(v_i\), and set \(v_0=a_1\), \(v_q=b_q\). The \(i\)-th copy then has
terminals \(v_{i-1},v_i\).

For even \(\Delta\ge4\), suppose that \(d_G(a)=2\), \(d_G(b)=\Delta-2\),
and every nonterminal has degree \(\Delta\). Form a chain of \(k\ge1\)
copies of \(G\) by identifying \(b_i\) with \(a_{i+1}\), \(1\le i<k\).
Attach a disjoint copy of \(K_{\Delta+1}-M\) at \(v_0\), where \(M\)
is a matching of \((\Delta-2)/2\) edges, by joining \(v_0\) to every vertex
incident with an edge of \(M\). At \(v_k\), attach a disjoint copy of
\(K_{\Delta+1}-e\) by joining \(v_k\) to the two vertices incident with \(e\).
The resulting graph, denoted
by \(G_k(G)\), is connected, simple and \(\Delta\)-regular, of order
\(k(|V(G)|-1)+2\Delta+3\). We call the copies attached at \(v_0\) and
\(v_k\) the left and right caps, respectively.
Figure~\ref{fig:regular-chain} illustrates the construction for \(\Delta=6\).

\begin{figure}[H]
\centering
\begin{tikzpicture}[x=10mm,y=10mm,line width=.45pt,font=\small,
  cap edge/.style={draw=capaccent,line width=.55pt},
  cap vertex/.style={draw=black,fill=white,line width=.6pt},
  attachment/.style={draw=black,line width=.95pt}]
\coordinate (v0) at (1.8,0);
\coordinate (v1) at (3.75,0);
\coordinate (v2) at (5.7,0);
\coordinate (vkm2) at (7.15,0);
\coordinate (vkm1) at (9.1,0);
\coordinate (vk) at (11.05,0);
\coordinate (cL0) at (0.592,0.792);
\coordinate (cL1) at (0.8818,0.2899);
\coordinate (cL2) at (0.8818,-0.2899);
\coordinate (cL3) at (0.592,-0.792);
\coordinate (cL4) at (-0.8424,-0.9175);
\coordinate (cL5) at (-1.32,0);
\coordinate (cL6) at (-0.8424,0.9175);
\coordinate (cR0) at (12.4301,0.56);
\coordinate (cR1) at (12.4301,-0.56);
\coordinate (cR2) at (13.3024,-1.1157);
\coordinate (cR3) at (14.3175,-0.6424);
\coordinate (cR4) at (14.52,0);
\coordinate (cR5) at (14.3175,0.6424);
\coordinate (cR6) at (13.3024,1.1157);
\draw[cap edge] (cL0) -- (cL2);
\draw[cap edge] (cL0) -- (cL3);
\draw[cap edge] (cL0) -- (cL4);
\draw[cap edge] (cL0) -- (cL5);
\draw[cap edge] (cL0) -- (cL6);
\draw[cap edge] (cL1) -- (cL2);
\draw[cap edge] (cL1) -- (cL3);
\draw[cap edge] (cL1) -- (cL4);
\draw[cap edge] (cL1) -- (cL5);
\draw[cap edge] (cL1) -- (cL6);
\draw[cap edge] (cL2) -- (cL4);
\draw[cap edge] (cL2) -- (cL5);
\draw[cap edge] (cL2) -- (cL6);
\draw[cap edge] (cL3) -- (cL4);
\draw[cap edge] (cL3) -- (cL5);
\draw[cap edge] (cL3) -- (cL6);
\draw[cap edge] (cL4) -- (cL5);
\draw[cap edge] (cL4) -- (cL6);
\draw[cap edge] (cL5) -- (cL6);
\draw[cap edge] (cR0) -- (cR2);
\draw[cap edge] (cR0) -- (cR3);
\draw[cap edge] (cR0) -- (cR4);
\draw[cap edge] (cR0) -- (cR5);
\draw[cap edge] (cR0) -- (cR6);
\draw[cap edge] (cR1) -- (cR2);
\draw[cap edge] (cR1) -- (cR3);
\draw[cap edge] (cR1) -- (cR4);
\draw[cap edge] (cR1) -- (cR5);
\draw[cap edge] (cR1) -- (cR6);
\draw[cap edge] (cR2) -- (cR3);
\draw[cap edge] (cR2) -- (cR4);
\draw[cap edge] (cR2) -- (cR5);
\draw[cap edge] (cR2) -- (cR6);
\draw[cap edge] (cR3) -- (cR4);
\draw[cap edge] (cR3) -- (cR5);
\draw[cap edge] (cR3) -- (cR6);
\draw[cap edge] (cR4) -- (cR5);
\draw[cap edge] (cR4) -- (cR6);
\draw[cap edge] (cR5) -- (cR6);
\draw[attachment] (v0) -- (cL0);
\draw[attachment] (v0) -- (cL1);
\draw[attachment] (v0) -- (cL2);
\draw[attachment] (v0) -- (cL3);
\draw[attachment] (vk) -- (cR0);
\draw[attachment] (vk) -- (cR1);
\foreach \v in {v0,v1,v2,vkm2,vkm1,vk}
  \fill (\v) circle[radius=1.25pt];
\foreach \v in {cL0,cL1,cL2,cL3,cL4,cL5,cL6,cR0,cR1,cR2,cR3,cR4,cR5,cR6}
  \draw[cap vertex] (\v) circle[radius=1.65pt];
\draw[dashed] (2.775,0) ellipse[x radius=.975,y radius=.6];
\node at (2.775,0) {$G^{(1)}$};
\draw[dashed] (4.725,0) ellipse[x radius=.975,y radius=.6];
\node at (4.725,0) {$G^{(2)}$};
\draw[dashed] (8.125,0) ellipse[x radius=.975,y radius=.6];
\node at (8.125,0) {$G^{(k-1)}$};
\draw[dashed] (10.075,0) ellipse[x radius=.975,y radius=.6];
\node at (10.075,0) {$G^{(k)}$};
\node at (6.425,0) {$\cdots$};
\node[below=4mm] at (v0) {$v_{0}$};
\node[below=4mm] at (v1) {$v_{1}$};
\node[below=4mm] at (v2) {$v_{2}$};
\node[below=4mm] at (vkm2) {$v_{k-2}$};
\node[below=4mm] at (vkm1) {$v_{k-1}$};
\node[below=4mm] at (vk) {$v_{k}$};
\node at (-.2,-1.48) {$K_7-M$};
\node at (13.4,-1.48) {$K_7-e$};
\end{tikzpicture}
\caption{The graph $G_k(G)$ for $\Delta=6$.}
\label{fig:regular-chain}
\end{figure}

For \(d\ge3\), put \(L_d=K_{d+1}-ab\), with terminals \(a,b\).
An \(L_d\) chain is obtained from disjoint copies of \(L_d\)
by adding the edges \(b_i a_{i+1}\) between consecutive copies.
For pairwise disjoint graphs \(G_1,\ldots,G_t\), their sequential join
\(G_1+\cdots+G_t\) is obtained from their disjoint union by adding
all edges between \(G_i\) and \(G_{i+1}\) for \(1\le i<t\).
For even \(\Delta\ge4\) and \(k\in\{2,\Delta-2\}\), define
\[
B_{\Delta,k}=K_1+K_k+K_{\Delta-k}+K_1.
\]
The vertices in the first and last \(K_1\) are its left and right terminals.
A \(B_{\Delta,k}\) chain is obtained from disjoint copies by
identifying the right terminal of each copy with the left terminal of the next.

For integers \(\Delta\ge3\), \(m\ge1\) and \(0\le r\le\Delta\), define
\(G_{\Delta,m,r}\) as follows. If \(\Delta\) is odd, start with an
\(L_\Delta\) chain of \(m\) copies, with terminals \(a_j,b_j\) in copy
\(j\). Take a disjoint graph on \(\Delta+2\) vertices \(c_0,\ldots,c_{\Delta+1}\),
obtained from \(K_{\Delta+2}\) by deleting \(c_0c_1,c_0c_2\) and a perfect
matching on \(c_3,\ldots,c_{\Delta+1}\), and attach it by the edge
\(b_mc_0\). Call this attached graph the right cap. If \(r>0\), add a path
\(p_1\cdots p_r\) on new vertices
and the edge \(a_1p_1\). The vertex \(c_0\) has degree \(\Delta-1\)
before attachment, and every other cap vertex has degree \(\Delta\).
The cap is connected: otherwise its minimum degree \(\Delta-1\) would
force each component to have at least \(\Delta\) vertices, contradicting
\(2\Delta>\Delta+2\).

If \(\Delta\) is even, start with a \(B_{\Delta,2}\) chain of \(m\)
copies, with terminals \(v_{j-1},v_j\) in copy \(j\). Attach a disjoint copy of
\(L_\Delta=K_{\Delta+1}-uv\) at \(v_m\) by the edges \(v_mu,v_mv\),
and call it the right cap. If \(r>0\), add a path \(p_1\cdots p_r\) on new vertices
and the edge
\(v_0p_1\). Every shared vertex, every cap vertex and \(v_m\) has degree
\(\Delta\).

In both cases, \(G_{\Delta,m,r}\) is connected and simple, with order
\(n=(\Delta+1)m+\Delta+2+r\) and maximum degree \(\Delta\).
When \(r=0\), its left terminal has degree \(\Delta-1\) in the odd case
and degree two in the even case; when \(r>0\), \(p_r\) has degree one.
Hence \(G_{\Delta,m,r}\in\mathcal G^*(n,\Delta)\).
Figure~\ref{fig:comparison-graphs} shows two examples.

\begin{figure}[H]
\centering
\begin{tikzpicture}[x=13mm,y=10mm,line width=.45pt,font=\small,
  cap edge/.style={draw=capaccent,line width=.55pt},
  cap vertex/.style={draw=black,fill=white,line width=.6pt},
  attachment/.style={draw=black,line width=.95pt}]
\begin{scope}
\coordinate (p1) at (0,0);
\coordinate (a1) at (1.1,0);
\coordinate (x1) at (2.05,0.58);
\coordinate (y1) at (2.05,-0.58);
\coordinate (b1) at (3.05,0);
\coordinate (a2) at (3.95,0);
\coordinate (x2) at (4.9,0.58);
\coordinate (y2) at (4.9,-0.58);
\coordinate (b2) at (5.9,0);
\coordinate (c0) at (6.8,0);
\coordinate (c3) at (7.8,0.58);
\coordinate (c4) at (7.8,-0.58);
\coordinate (c1) at (8.8,0.9);
\coordinate (c2) at (8.8,-0.9);
\draw (a1) -- (x1);
\draw (a1) -- (y1);
\draw (b1) -- (x1);
\draw (b1) -- (y1);
\draw (x1) -- (y1);
\draw (a2) -- (x2);
\draw (a2) -- (y2);
\draw (b2) -- (x2);
\draw (b2) -- (y2);
\draw (x2) -- (y2);
\draw[cap edge] (c0) -- (c3);
\draw[cap edge] (c0) -- (c4);
\draw[cap edge] (c1) -- (c2);
\draw[cap edge] (c1) -- (c3);
\draw[cap edge] (c1) -- (c4);
\draw[cap edge] (c2) -- (c3);
\draw[cap edge] (c2) -- (c4);
\draw (p1) -- (a1);
\draw (b1) -- (a2);
\draw[attachment] (b2) -- (c0);
\foreach \v in {p1,a1,x1,y1,b1,a2,x2,y2,b2} \fill (\v) circle[radius=1.25pt];
\foreach \v in {c0,c3,c4,c1,c2} \draw[cap vertex] (\v) circle[radius=1.65pt];
\node at (4.5,-2.0) {(a) $\Delta=3$, $m=2$, $r=1$};
\node[below=3pt] at (p1) {$p_1$};
\node[below=3pt] at (a1) {$a_1$};
\node[below=3pt] at (b1) {$b_1$};
\node[below=3pt] at (a2) {$a_2$};
\node[below=3pt] at (b2) {$b_2$};
\node[below=3pt] at (c0) {$c_0$};
\node[above=3pt] at (c1) {$c_1$};
\node[below=3pt] at (c2) {$c_2$};
\node[above=3pt] at (c3) {$c_3$};
\node[below=3pt] at (c4) {$c_4$};
\end{scope}
\begin{scope}[yshift=-42mm]
\coordinate (p2) at (0,0);
\coordinate (p1) at (0.65,0);
\coordinate (v0) at (1.3,0);
\coordinate (x11) at (2.1,0.58);
\coordinate (x12) at (2.1,-0.58);
\coordinate (y11) at (3.1,0.58);
\coordinate (y12) at (3.1,-0.58);
\coordinate (v1) at (3.9,0);
\coordinate (x21) at (4.7,0.58);
\coordinate (x22) at (4.7,-0.58);
\coordinate (y21) at (5.7,0.58);
\coordinate (y22) at (5.7,-0.58);
\coordinate (v2) at (6.5,0);
\coordinate (u) at (7.4,0.58);
\coordinate (v) at (7.4,-0.58);
\coordinate (cx) at (9.0,0);
\coordinate (cy) at (8.4,0.9);
\coordinate (cz) at (8.4,-0.9);
\draw (x11) -- (x12);
\draw (x11) -- (y11);
\draw (x11) -- (y12);
\draw (x12) -- (y11);
\draw (x12) -- (y12);
\draw (y11) -- (y12);
\draw (v0) -- (x11);
\draw (v0) -- (x12);
\draw (v1) -- (y11);
\draw (v1) -- (y12);
\draw (x21) -- (x22);
\draw (x21) -- (y21);
\draw (x21) -- (y22);
\draw (x22) -- (y21);
\draw (x22) -- (y22);
\draw (y21) -- (y22);
\draw (v1) -- (x21);
\draw (v1) -- (x22);
\draw (v2) -- (y21);
\draw (v2) -- (y22);
\draw[cap edge] (u) -- (cx);
\draw[cap edge] (u) -- (cy);
\draw[cap edge] (u) -- (cz);
\draw[cap edge] (v) -- (cx);
\draw[cap edge] (v) -- (cy);
\draw[cap edge] (v) -- (cz);
\draw[cap edge] (cx) -- (cy);
\draw[cap edge] (cx) -- (cz);
\draw[cap edge] (cy) -- (cz);
\draw[attachment] (v2) -- (u);
\draw[attachment] (v2) -- (v);
\draw (p2) -- (p1);
\draw (p1) -- (v0);
\foreach \v in {p2,p1,v0,x11,x12,y11,y12,v1,x21,x22,y21,y22,v2} \fill (\v) circle[radius=1.25pt];
\foreach \v in {u,v,cx,cy,cz} \draw[cap vertex] (\v) circle[radius=1.65pt];
\node at (4.5,-2.0) {(b) $\Delta=4$, $m=2$, $r=2$};
\node[below=3pt] at (p2) {$p_2$};
\node[below=3pt] at (p1) {$p_1$};
\node[below=3pt] at (v0) {$v_0$};
\node[below=3pt] at (v1) {$v_1$};
\node[below=3pt] at (v2) {$v_2$};
\node[above=3pt] at (u) {$u$};
\node[below=3pt] at (v) {$v$};
\end{scope}
\end{tikzpicture}
\caption{The graphs $G_{\Delta,m,r}$ for $\Delta=3,4$.}
\label{fig:comparison-graphs}
\end{figure}

We use the following resistance bound for odd \(\Delta\).
\begin{lemma}[{\cite[Theorem~5.3]{AGI}}]
\label{lem:AGI-resistance}
Let \(d\ge3\) be an integer and let \(G\) be a connected simple graph of order \(n\),
with distinct terminals \(a,b\). Suppose that \(d_G(a),d_G(b)\ge d-1\)
and every nonterminal has degree at least \(d\). Then
\[
R_G(a,b)\le\frac n{d-1}-1.
\]
Equality holds if and only if \(n\) is a multiple of \(d+1\) and
\(G\) is an \(L_d\) chain with terminals \(a,b\).
\end{lemma}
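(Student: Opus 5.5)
The plan is to work with the dual (Thomson) description of resistance: $R_G(a,b)$ is the minimum energy of a unit $a$–$b$ flow. The bound will follow from a structural induction on $n$ that splits $G$ along small $a$–$b$ edge cuts and handles the pieces separately. The target $\frac n{d-1}-1$ is additive in exactly the right way. If $G$ consists of two pieces $G_1,G_2$ of orders $n_1,n_2$ joined by a bridge $b_1a_2$, then series composition gives
\[
R_G(a,b)=R_{G_1}(a,b_1)+1+R_{G_2}(a_2,b)\le\Bigl(\frac{n_1}{d-1}-1\Bigr)+1+\Bigl(\frac{n_2}{d-1}-1\Bigr)=\frac n{d-1}-1 .
\]
Deleting the bridge lowers the degrees of $b_1$ and $a_2$ by one. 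These vertices become terminals of degree $\ge d-1$, so the hypotheses of the lemma are inherited by the pieces. This explains both the bound and why $L_d$ chains are extremal.

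\textbf{Step 1 (bridge case).} If some bridge separates $a$ from $b$, split along it, apply induction to both sides and add as above. Equality forces equality in both pieces, so by induction both are $L_d$ chains, and so is $G$.

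\textbf{Step 2 (no separating bridge).} Let $k\ge2$ be the local edge connectivity between $a$ and $b$. I would order the vertices by the harmonic potential $u$ of \eqref{eq:harmonic} and consider the level cuts $C_t$ between $\{u\le t\}$ and $\{u>t\}$. The current $1/R_G(a,b)$ crosses every $C_t$, and $|C_t|\ge 2$. A set $S$ with $|S|\le d$ containing $a$ has at least $|S|(d-|S|+1)-1\ge d-2$ edges leaving it. Hence a cut of size less than about $d-1$ forces both sides to contain at least $d+1$ vertices. Using $k$ edge-disjoint $a$–$b$ paths from Menger's theorem, I would build a unit flow splitting equally along them. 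Its energy is at most $\frac1{k^2}\sum_i \ell_i\le\frac{n-2+k}{k^2}$, which is already below $\frac n{d-1}-1$ once $k\ge d-1$ and $n$ is not tiny. For $2\le k<d-1$, I would take a minimum $a$–$b$ edge cut $F$ and contract each side to a vertex. This yields two smaller networks whose new terminal has degree $|F|$. I would then bound $R_G(a,b)$ by the sum of the two resistances plus $1/|F|$, and apply induction to each side. Each side has at least $d+1$ vertices, so the loss is at most the $+1$ from Step 1 and in fact strictly less, because $1/|F|\le\frac12<1$.

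\textbf{Main obstacle.} The hard part is Step 2, because contraction produces a multigraph whose new terminal can have degree $|F|<d-1$. The induction hypothesis therefore does not apply verbatim. I would first try to prove a stronger statement by induction: allow multigraphs, and allow terminals of degree $\ge d-1$ or an arbitrary vertex set hanging off a cut. If that fails, I would fall back on a direct coarea-type estimate for the harmonic $u$,
\[
\frac1{R_G(a,b)}=\mathcal E_G(u)\ge\Bigl(\int_0^1\frac{dt}{|C_t|}\Bigr)^{-1}.
\]
I would then partition $[0,1]$ into intervals on which $|C_t|\ge d-1$ or on which the sublevel set grows by at least $d+1$ vertices, and count vertices against the length of each interval.

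\textbf{Equality.} Tracking the strict inequalities in Step 2 shows that equality can occur only through Step 1 splittings down to pieces with $n=d+1$. Such a piece has the terminal-degree constraints of the lemma, and those constraints force it to be $K_{d+1}-ab$. Therefore $n$ is a multiple of $d+1$ and $G$ is an $L_d$ chain. Conversely, $R_{L_d}(a,b)=\frac2{d-1}$, which equals $\frac{d+1}{d-1}-1$. The equality case then follows from the bridge additivity above.
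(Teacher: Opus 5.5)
\textbf{There is a genuine gap: the no-bridge case (Step~2), which is the real content of the theorem, is not proved.} The paper does not prove this lemma; it quotes it from Abdi and Ghorbani~\cite[Theorem~5.3]{AGI}, so your argument has to stand on its own. Your Step~1 is sound. Splitting along a bridge that separates $a$ from $b$ preserves the hypotheses, and the bound is exactly additive. You should add the degenerate case of a bridge incident with $a$ or $b$: there the far side contains a vertex of degree at least $d$ with all its edges on that side, so it has at least $d+1$ vertices and the inequality is strict. None of the three devices you propose for Step~2 works as stated.

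First, the bound $R_G(a,b)\le R_{G_X}(a,y)+1/|F|+R_{G_Y}(x,b)$, obtained by contracting the two sides of a minimum cut $F$, is not valid. Contraction is shorting, which can only lower resistance. An upper bound on $R_G(a,b)$ needs a single unit flow in $G$, but the optimal flows in the two contracted networks may load the edges of $F$ in incompatible proportions. For example, take the cycle made of the edges $ax_1$, $x_1y_1$, $x_2y_2$, $y_2b$, a $y_1$--$b$ path $P$ and an $x_2$--$a$ path $Q$, with $P,Q$ of length $L$. Put $X=\{x_1\}\cup V(Q)$ and $F=\{x_1y_1,x_2y_2\}$. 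Both contracted resistances are $2(L+1)/(L+3)<2$, while $R_G(a,b)=(L+2)/2$. Your sketch does not use the degree hypotheses at this point. Even granting some such inequality, the contracted terminal has degree $|F|<d-1$, as you note. The stronger inductive statement you allude to would need a terminal-degree correction small enough to keep the exact constant $-1$ and the equality case, and you do not formulate it.

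Second, your fallback coarea estimate points the wrong way. Every level cut carries the full current, $\sum_{xy\in C_t}|u_x-u_y|=1/R_G(a,b)$. Cauchy--Schwarz together with $|u_x-u_y|\le1$ then gives $R_G(a,b)\ge\int_0^1 dt/|C_t|$, which is a lower bound. Your inequality already fails for $L_d$: there $|C_t|=d-1$ at every nonvertex level, so it would give $R\le1/(d-1)$, whereas $R_{L_d}(a,b)=2/(d-1)$.

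Third, the estimate $\sum_i\ell_i\le n-2+k$ requires internally vertex-disjoint paths, not the edge-disjoint paths you take from Menger. Glue $m\ge2$ copies of $L_d$ by identifying consecutive terminals. Then $k=d-1$ and $n=md+1$, but every $a$--$b$ path has length at least $2m$, so $\sum_i\ell_i\ge2m(d-1)=n-2+k+(m-1)(d-2)$. If you switch to vertex connectivity, the remaining case contains separating sets of fewer than $d-1$ vertices. For instance, a cut vertex shares its degree between the two sides, so the pieces again violate the degree hypotheses.

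As written, your argument proves the bound only for graphs that split along separating bridges into pieces having $d-1$ internally disjoint $a$--$b$ paths. The general inequality remains unproved. So does the equality characterization, since you derive it from strictness in Step~2.
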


For even \(\Delta\), we use an algebraic connectivity estimate.
\begin{lemma}[{\cite[Corollary~1.2]{AGII}}]
\label{lem:even-connectivity}
For every fixed even integer \(\Delta\ge4\), the minimum algebraic
connectivity among connected simple \(\Delta\)-regular graphs of order
\(n\) is
\[
\frac{2(\Delta-2)\pi^2}{n^2}+O_\Delta(n^{-3})
\qquad(n\to\infty).
\]
\end{lemma}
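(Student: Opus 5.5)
The plan is a matching upper and lower bound for \(\nu_2\), obtained by reducing a long \(\Delta\)-regular graph to a weighted path and comparing with the Dirichlet eigenvalue problem on an interval. The heuristic that fixes the constant is this. Suppose a graph is a chain of \(k\) blocks, each contributing \(m\) vertices and effective resistance \(R\) across the block. Then its low-frequency Laplacian behaviour is that of a string of length \(kR\) with mass density \(m/R\) per unit resistance, so
\[
\nu_2\approx\frac{\pi^2}{(kR)^2}\cdot\frac{R}{m}=\frac{\pi^2 m}{n^2R},
\]
using \(n\approx km\). For even \(\Delta\) the claim therefore amounts to showing that the largest achievable ratio \(R/m\), per vertex, in a \(\Delta\)-regular chain is \(1/(2(\Delta-2))\) up to lower-order terms.

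\textbf{Upper bound.} Take the regular chain \(G_k(G)\) with \(G=B_{\Delta,2}\) or \(B_{\Delta,\Delta-2}\) (whichever block gives the larger resistance per vertex; I would compute both via series--parallel reduction and the harmonic potential inside the two cliques). Use the test vector \(x_v=\cos(\pi\,\phi(v))\). Here \(\phi(v)\in[0,1]\) is the normalised harmonic potential between \(v_0\) and \(v_k\), extended as constant on the caps, and \(x\) is shifted by a constant to be orthogonal to \(\mathbf 1\). Within each block the energy equals the squared potential drop divided by the block resistance, up to an \(O(k^{-3})\) error per block. Summing gives a Riemann sum for \(\int_0^1\pi^2\sin^2(\pi t)\,dt\) on the energy side and \(\int_0^1\cos^2(\pi t)\,dt\) on the mass side. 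The Rayleigh quotient is then \(\pi^2 m/(n^2R)+O(n^{-3})\). The caps contribute only \(O(1)\) vertices and perturb the quotient by \(O(n^{-3})\). I expect this step to go through by direct computation once the block resistance is checked to equal \(m/(2(\Delta-2))\).

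\textbf{Lower bound.} Let \(f\) be a Fiedler vector of a connected \(\Delta\)-regular \(H\) of order \(n\). Since every edge cut in an even-regular graph has even size, \(H\) is \(2\)-edge-connected. I would order the vertices by the value of \(f\) and consider the level cuts \(\{f\le t\}\). The aim is to prove a local inequality: any set \(S\) of vertices separated by two cuts of sizes \(c_1,c_2\) satisfies
\[
R(\text{cut}_1,\text{cut}_2)\le\frac{|S|}{2(\Delta-2)}+O(1).
\]
This plays the role of Lemma~\ref{lem:AGI-resistance} in the even case. It rests on the fact that a block between two \(2\)-edge cuts must contain roughly \(\Delta\) vertices to have enough degree, while only sending a small amount of current. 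Given this inequality, I would map \(V(H)\) to \([0,\ell]\) by resistance distance along the ordering, interpolate \(f\) piecewise linearly, and apply the Wirtinger inequality \(\int g'^2\ge(\pi/\ell)^2\int g^2\) for mean-zero \(g\). Together with \(\ell\le n/(2(\Delta-2))+O(1)\) and the same mass density, this yields \(\nu_2\ge 2(\Delta-2)\pi^2/n^2-O(n^{-3})\).

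\textbf{Main obstacle.} The hard step is the sharp local resistance-per-vertex inequality. It needs a structural classification of what lies between consecutive small edge cuts, and an extremal argument showing that \(B_{\Delta,2}\)-type blocks are optimal. Controlling the error in the Wirtinger comparison to order \(n^{-3}\) is a second, more technical difficulty. I would first attempt the local inequality by induction on blocks between consecutive \(2\)-edge cuts, bounding a block's resistance by the number of its vertices via an explicit unit flow.
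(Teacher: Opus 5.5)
The paper does not prove this lemma: it is quoted from \cite[Corollary~1.2]{AGII}, so your proposal has to stand on its own.

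Your upper bound does stand. The test vector on $G_k(B_{\Delta,2})$, with $m=\Delta+1$ and $R_{B_{\Delta,2}}(a,b)=(\Delta+1)/(2(\Delta-2))$, is essentially the computation in the proof of Lemma~\ref{lem:terminal}. Keeping track of the errors there gives $\nu_2\le2(\Delta-2)\pi^2/n^2+O(n^{-3})$. You should add that $G_k(G)$ only has orders $n\equiv1\pmod{\Delta+1}$. For the other residues, enlarge the caps; for instance, take a $\Delta$-regular graph on $p\ge\Delta+1$ vertices minus one edge for the right cap, or minus $(\Delta-2)/2$ independent edges for the left cap. This changes only $O(1)$ vertices.

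The lower bound, which is the real content of the cited corollary, has a genuine gap: it rests entirely on your local inequality $R\le|S|/(2(\Delta-2))+O(1)$, which you do not prove.

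\emph{It cannot be taken from this paper.} The even cases of Lemmas~\ref{lem:terminal} and~\ref{lem:completion} are deduced \emph{from} Lemma~\ref{lem:even-connectivity}. The paper runs your upper-bound computation on $G_k(G)$ for an arbitrary block $G$ and compares the result with the cited lower bound. Your reduction (resistance bound, then level sets, piecewise-linear interpolation and the mean-zero Wirtinger inequality, as in Proposition~\ref{prop:lower}) is the converse of the paper's. It is sound, but it only relocates the difficulty.

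\emph{Your sketched route to the local inequality does not work as stated.} First, a region can contain $\Theta(n)$ blocks delimited by consecutive $2$-edge cuts. An induction that loses an additive $O(1)$ per block therefore destroys the leading constant. The inductive statement must be sharp, with a constant that is exactly additive under gluing, like $R\le n/(d-1)-1$ in Lemma~\ref{lem:AGI-resistance} or \eqref{eq:even-resistance}. Second, resistances do not simply add across a $2$-edge cut whose two edges land on different vertices. Third, flows built from connectivity alone are too weak. In a region with no $2$-edge cut, splitting a unit flow over four edge-disjoint $s$--$t$ paths gives only $R\le(|S|+1)/4$. That matches the target for $\Delta=4$ but fails for $\Delta\ge6$. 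Showing that $B_{\Delta,2}$-type blocks are extremal needs a genuine structural argument, which is missing.

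\emph{The error term is not reached either.} Even granting the resistance bound, a level-set discretization of this kind yields only $O(n^{-5/2})$, not the stated $O(n^{-3})$. Each of the $K$ layers adds $O(1)$ to the effective length, while quantization costs $O(n/K)$ in $\sum_v f_v^2$. Balancing these forces $K\asymp\sqrt n$.
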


\section{Resistance bounds}\label{sec:resistance}

We first consider prescribed terminal degrees.

\begin{lemma}\label{lem:terminal} Let \(G\) be a connected simple graph of order \(n\),
with distinct terminals \(a,b\), and suppose that every nonterminal has degree \(\Delta\).
For odd \(\Delta\ge3\), if the terminal degrees are \(\Delta-1,\Delta-1\), then
\begin{equation}
R_G(a,b)\le\frac n{\Delta-1}-1.                                        \label{eq:odd-resistance}
\end{equation}
For even \(\Delta\ge4\), if \(d_G(a)=2\) and \(d_G(b)=\Delta-2\), then
\begin{equation}
R_G(a,b)\le\frac{n-1}{2(\Delta-2)}.                                   \label{eq:even-resistance}
\end{equation}
\end{lemma}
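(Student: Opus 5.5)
For odd \(\Delta\) the bound \eqref{eq:odd-resistance} is a special case of Lemma~\ref{lem:AGI-resistance}. Take \(d=\Delta\ge3\): the terminals have degree \(\Delta-1=d-1\), every nonterminal has degree \(\Delta=d\), and \(G\) is connected and simple. The lemma then gives \(R_G(a,b)\le n/(\Delta-1)-1\) directly.

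For even \(\Delta\ge4\) the plan is to transfer Lemma~\ref{lem:even-connectivity} to a resistance bound through the regular graphs \(G_k(G)\), with \(G\) fixed and \(k\to\infty\). The hypotheses \(d_G(a)=2\), \(d_G(b)=\Delta-2\) and nonterminal degrees \(\Delta\) are exactly those in the definition of \(G_k(G)\). Hence \(G_k(G)\) is a connected simple \(\Delta\)-regular graph of order \(N=k(n-1)+2\Delta+3\). Write \(R=R_G(a,b)\). Let \(u\) be the potential of \eqref{eq:harmonic} on \(G\), so \(0\le u\le1\) and \(\mathcal E_G(u)=1/R\). Define a position function \(\varphi\) on \(G_k(G)\) as follows:
\begin{itemize}
\item on the \(i\)-th copy, \(\varphi=(i-1)+u\); these definitions agree at the shared vertices \(v_i=i\);
\item \(\varphi\equiv0\) on the left cap and \(\varphi\equiv k\) on the right cap.
\end{itemize}
Then edges inside caps and from the caps to \(v_0,v_k\) contribute nothing to the energy.

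We use the test vector \(x=f\circ\varphi-\mu\mathbf1\), where \(f(t)=\cos(\pi t/k)\) and \(\mu\) is the mean of \(f\circ\varphi\), so that \(x\perp\mathbf1\).
\begin{itemize}
\item \emph{Energy.} For an edge in copy \(i\), the mean value theorem gives \((f(\varphi_u)-f(\varphi_v))^2\le(\pi/k)^2\max_{[i-1,i]}\sin^2(\pi t/k)\,(u_u-u_v)^2\). Summing over copies, the energy is at most \((\pi^2/k^2)(1/R)\sum_{i}\max_{[i-1,i]}\sin^2(\pi t/k)=(\pi^2/k^2)(1/R)(k/2+O(1))\).
\item \emph{Norm.} Each copy contributes \(n-1\) non-shared vertices with \(\varphi\in[i-1,i]\). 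Thus \(\sum f(\varphi)^2\ge(n-1)\sum_i\min_{[i-1,i]}\cos^2(\pi t/k)=(n-1)(k/2-O(1))\), the minus sign reflecting that the minimum over each interval is taken.
\item \emph{Centering.} The sum \(\sum_v f(\varphi_v)\) is \(O_\Delta(n)\), because \(\sum_i\cos(\pi(i-\theta_i)/k)=O(1)\) for \(\theta_i\in[0,1]\) and the caps contribute \(O(\Delta)\). Hence \(N\mu^2=O(n^2/(kn))\to0\).
\end{itemize}
The Rayleigh quotient then yields
\[
\nu_2(G_k(G))\le\frac{\pi^2}{R\,k^2(n-1)}\bigl(1+O_{n,\Delta}(1/k)\bigr).
\]

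Lemma~\ref{lem:even-connectivity} gives \(\nu_2(G_k(G))\ge 2(\Delta-2)\pi^2/N^2-O_\Delta(N^{-3})\). Combining the two bounds, multiplying by \(k^2\), and using \(N/k\to n-1\), we let \(k\to\infty\) and obtain \(1/(R(n-1))\ge 2(\Delta-2)/(n-1)^2\). This is \eqref{eq:even-resistance}.

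The main obstacle is the error bookkeeping in the energy and norm estimates. Everything hinges on the errors being \(O(1/k)\) \emph{relative} to the main terms for fixed \(n\), so that they vanish in the limit. A cruder linear test vector would lose the constant (it gives \(12/\pi^2\) in place of \(1\)), so the cosine profile and the per-interval max/min comparisons are essential.
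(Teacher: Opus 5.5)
Your proposal is correct and follows the paper's proof. The odd case is Lemma~\ref{lem:AGI-resistance} with $d=\Delta$; the even case builds the $\Delta$-regular chains $G_k(G)$, uses a cosine-profile test vector built from the potential $u$, and compares the resulting Rayleigh-quotient bound with Lemma~\ref{lem:even-connectivity} as $k\to\infty$. The only difference is technical: you compose $\cos(\pi t/k)$ with $\varphi=(i-1)+u$ and bound the energy by the mean value theorem, whereas the paper interpolates linearly in $u$ between $T_{i-1}$ and $T_i$, with $T_i=\cos(\pi i/k)$, so its energy is exactly $R^{-1}\sum_i(T_i-T_{i-1})^2$ and is bounded by Cauchy--Schwarz.
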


\begin{proof}
For odd \(\Delta\), apply Lemma~\ref{lem:AGI-resistance} with \(d=\Delta\).

Now let \(\Delta\ge4\) be even. Set \(R=R_G(a,b)>0\) and \(m=n-1\), and let
\(u\in\mathbb R^{V(G)}\) minimize \(\mathcal E_G\) subject to \(u_a=0\), \(u_b=1\).
For each integer \(k\ge2\), let \(G_k=G_k(G)\) be the graph defined in
Section~\ref{sec:preliminaries}. Its \(i\)-th copy \(G^{(i)}\) has terminals
\(v_{i-1},v_i\).
Its order is \(n_k=km+2\Delta+3\).

\begin{samepage}
For \(1\le i\le k\), let \(u^{(i)}\in\mathbb R^{V(G^{(i)})}\)
be the copy of \(u\) on \(G^{(i)}\), so that
\(u^{(i)}_{v_{i-1}}=0\) and \(u^{(i)}_{v_i}=1\).
Put \(T_i=\cos(\pi i/k)\) for \(0\le i\le k\).
Define \(f\in\mathbb R^{V(G_k)}\) by
\[
f_v=T_{i-1}+(T_i-T_{i-1})u^{(i)}_v
\qquad (v\in V(G^{(i)}),\ 1\le i\le k),
\]
and set \(f_v=1\) on the left cap and \(f_v=-1\) on the right cap.
At each shared vertex \(v_i\), both formulas give \(T_i\),
so \(f\) is well-defined. Since \(T_0=1\) and \(T_k=-1\), each edge \(xy\)
inside a cap or joining a cap to the chain contributes \((f_x-f_y)^2=0\)
to \(\mathcal E_{G_k}(f)\).
\end{samepage}

By the definition of \(f\) and \eqref{eq:harmonic}, the edges of
\(G^{(i)}\) contribute \((T_i-T_{i-1})^2\mathcal E_G(u)
=(T_i-T_{i-1})^2/R\) to \(\mathcal E_{G_k}(f)\).
By the Cauchy--Schwarz inequality,
\[
(T_i-T_{i-1})^2
=\left(\int_{(i-1)/k}^{i/k}\pi\sin(\pi t)\,dt\right)^2
\le\frac1k\int_{(i-1)/k}^{i/k}\pi^2\sin^2(\pi t)\,dt.
\]
Summing over the copies, we obtain
\[
\mathcal E_{G_k}(f)=\frac1R\sum_{i=1}^k(T_i-T_{i-1})^2
\le\frac1{Rk}\int_0^1\pi^2\sin^2(\pi t)\,dt
=\frac{\pi^2}{2Rk}.
\]
We next estimate \(\sum_{v\in V(G_k)}f_v\) and \(\|f\|_2^2\). The sets \(V(G^{(i)})\setminus\{v_i\}\), \(1\le i\le k\),
are disjoint and have \(m\) vertices each. The remaining vertices are
\(v_k\) and the \(2(\Delta+1)\) cap vertices. For a vertex \(v\) in the \(i\)-th set, the maximum
principle gives \(0\le u^{(i)}_v\le1\). By the definition of \(f\), this
implies \(f_v\in[T_i,T_{i-1}]\subseteq[-1,1]\), and hence
\[
|f_v-T_{i-1}|\le |T_i-T_{i-1}|,\qquad
|f_v^2-T_{i-1}^2|\le 2|T_i-T_{i-1}|.
\]
Since \(\sum_{i=1}^k|T_i-T_{i-1}|=2\), while
\(\sum_{i=1}^kT_{i-1}=1\) and \(\sum_{i=1}^kT_{i-1}^2=k/2\), we obtain
\[
\left|\sum_{v\in V(G_k)}f_v\right|\le3m+2\Delta+3,\qquad
\left|\|f\|_2^2-\frac{mk}{2}\right|\le4m+2\Delta+3.
\]
Set \(z=f-n_k^{-1}(\sum_{v\in V(G_k)} f_v)\mathbf1\). Then
\(z\) has mean zero, \(\mathcal E_{G_k}(z)=\mathcal E_{G_k}(f)\), and
\[
\frac{\|z\|_2^2}{k}
=\frac{\|f\|_2^2}{k}-\frac1{kn_k}
 \left(\sum_{v\in V(G_k)}f_v\right)^2
\longrightarrow\frac m2\qquad(k\to\infty).
\]
Since \(f\) is nonconstant, \(z\ne0\), and the Rayleigh principle gives
\[
\nu_2(G_k)\le\frac{\mathcal E_{G_k}(z)}{\|z\|_2^2}
\le\frac{\pi^2}{2Rk\|z\|_2^2}.
\]
Since \(n_k/k\to m\), it follows that
\begin{equation}
\limsup_{k\to\infty}n_k^2\nu_2(G_k)\le \frac{m\pi^2}{R}.          \label{eq:regular-limit}
\end{equation}
By Lemma~\ref{lem:even-connectivity},
\[
\liminf_{k\to\infty}n_k^2\nu_2(G_k)\ge2(\Delta-2)\pi^2.
\]
Together with \eqref{eq:regular-limit}, this gives
\(2(\Delta-2)\pi^2\le m\pi^2/R\), hence \eqref{eq:even-resistance}.
\end{proof}

The next lemma extends these bounds to more general terminal degrees and
allows parallel edges at the terminals, as needed for the networks in
Section~\ref{sec:proof}.

\begin{lemma}\label{lem:completion} Let \(H\) be a connected loopless unit-resistance multigraph
with distinct terminals \(s,t\) and \(N\) other vertices. Suppose every
nonterminal has degree \(\Delta\), with no parallel edges between nonterminals.
Write \(d_s=d_H(s)\), \(d_t=d_H(t)\). For odd \(\Delta\ge3\),
\begin{equation}
R_H(s,t)\le\frac{N+d_s+d_t+\Delta+1}{\Delta-1}.                            \label{eq:odd-completion}
\end{equation}
For even \(\Delta\ge4\), if \(d_s,d_t\) are positive and even, then
\begin{equation}
R_H(s,t)\le\frac{N+d_s+d_t+2\Delta-1}{2(\Delta-2)}.                         \label{eq:even-completion}
\end{equation}
\end{lemma}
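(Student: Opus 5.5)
We reduce to Lemma~\ref{lem:terminal} by a completion argument. Starting from \(H\), we build a simple graph \(G\) with terminals \(a,b\) in which every nonterminal has degree exactly \(\Delta\), the terminal degrees are the ones required by Lemma~\ref{lem:terminal}, and \(H\) sits inside \(G\) in a resistance-monotone way. Concretely, \(s\) and \(t\) are replaced by new structures: we \emph{split} each terminal into its edge endpoints and then reconnect these through small gadgets. Rayleigh monotonicity then gives \(R_H(s,t)\le R_G(a,b)+(\text{gadget contribution})\), or more cleanly: identifying all vertices of a gadget back into \(s\) (shorting) only decreases resistance, so \(R_H(s,t)\ge R_{G/\sim}\). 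That is the wrong direction, so we arrange the opposite. We take \(G\) so that \(H\) is obtained from \(G\) by shorting sets of vertices and deleting edges. Deleting edges and shorting sets act in opposite directions, so we use only shorting and keep \(H\) as a subgraph.

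\textbf{Construction (odd case).} We replace \(s\) by \(d_s\) distinct vertices \(s_1,\dots,s_{d_s}\), each receiving one of the edges at \(s\). This removes parallel edges, since parallel edges occur only at terminals. We do the same for \(t\). Each \(s_j\) now has degree \(1\) and needs \(\Delta-1\) more. We then attach a gadget \(S\) containing a new terminal \(a\) of degree \(\Delta-1\), with every other gadget vertex of degree \(\Delta\), joined to all \(s_j\). A natural choice is to take \(a\) adjacent to the \(s_j\) where possible, and to use a clique-like piece on about \(\Delta+1\) extra vertices to absorb the parity and degree corrections. The count \(N+d_s+d_t+\Delta+1\) suggests a total of \(N+d_s+d_t+O(\Delta)\) vertices with the constant tuned so that Lemma~\ref{lem:terminal} gives exactly \eqref{eq:odd-completion}. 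Shorting all of \(S\cup\{s_j\}\) to \(s\), and similarly on the \(t\) side, produces \(H\) plus extra edges (which only lower resistance) but no edges deleted. Hence
\[
R_H(s,t)\ \ge\ \cdots
\]
again points the wrong way. The correct route is therefore the Dirichlet form: take the harmonic \(u\) of \(G\), whose restriction to \(H\) is a test function. Instead we use \eqref{eq:resistance} on \(G\) with the test function \(x\) equal to the \(H\)-potential \(u_H\) on nonterminals, equal to \(0\) on the whole \(s\)-side gadget and \(1\) on the \(t\)-side gadget. Then \(\mathcal E_G(x)=\mathcal E_H(u_H)=1/R_H(s,t)\), because gadget edges are constant and each edge \(s_jw\) reproduces an edge \(sw\). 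Applying \eqref{eq:resistance} gives \(1/R_H\ge 1/R_G(a,b)\), that is, \(R_H(s,t)\le R_G(a,b)\le |V(G)|/(\Delta-1)-1\). This direction is correct, and it is the core mechanism.

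\textbf{Construction (even case).} The same scheme applies, but now \(a\) must have degree \(2\) and \(b\) degree \(\Delta-2\), with Lemma~\ref{lem:terminal}'s bound \((|V(G)|-1)/(2(\Delta-2))\). The evenness of \(d_s,d_t\) is what makes the degree sums parity-consistent for gadgets in which every vertex except the terminal has even degree \(\Delta\). For example, we can pair the \(s_j\) and use copies of \(K_{\Delta+1}\) minus a matching (as in the caps of \(G_k(G)\)) so that each \(s_j\) gains \(\Delta-1\) neighbours. If simplicity forces it, we may instead not split \(s\) fully, keeping \(s\) itself as a vertex that absorbs pairs of edges.

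\textbf{Main obstacle.} The main difficulty is designing gadgets with the exact vertex count that yields the stated constants \(\Delta+1\) and \(2\Delta-1\), while keeping \(G\) simple, connected, exactly \(\Delta\)-regular off the terminals, and with the prescribed terminal degrees, for all values of \(d_s,d_t\). We would first try having the split vertices \(s_j\) themselves form most of the gadget, for instance making \(\{s_j\}\cup\{a\}\) nearly a clique completed by a few extra vertices, and check small \(d_s\) separately. If the exact constant is not achieved, a slightly larger gadget still suffices asymptotically, but we aim to match the statement.
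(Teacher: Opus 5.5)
Your final mechanism matches the paper's: replace each terminal by a gadget, move the $H$-edges at that terminal to distinct gadget vertices (your $s_j$), and evaluate $\mathcal E_G$ at the function equal to the $H$-potential on nonterminals and constant on each gadget. This gives $R_H(s,t)\le R_G(a,b)$, and then you apply Lemma~\ref{lem:terminal}. The earlier attempts via shorting and Rayleigh monotonicity can be deleted. The gap is the step you call the ``main obstacle'', which is the real content of the lemma. For every relevant $d$ (every $d\ge1$ for odd $\Delta$, every even $d\ge2$ for even $\Delta$) you need a connected simple graph $F$ with the following properties: a distinguished vertex of degree $d_0$ ($\Delta-1$ for odd $\Delta$; $2$ at $s$ and $\Delta-2$ at $t$ for even $\Delta$), exactly $d$ further vertices of degree $\Delta-1$, all other vertices of degree $\Delta$, and $|V(F)|\le d+\Delta$. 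The order bound yields $|V(G)|\le N+d_s+d_t+2\Delta$. That is precisely what turns $n/(\Delta-1)-1$ and $(n-1)/(2(\Delta-2))$ into the stated right-hand sides. Without an explicit family of such gadgets, neither inequality is established.

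Your hints do not provide this family, and they aim at the wrong regime. In Proposition~\ref{prop:lower} the lemma is applied with $d=b(\tau_i)$, which is controlled only in aggregate, so arbitrarily large $d$ must be handled; small $d$ is the easy case. Each hint fails on its own terms:
for $d>\Delta$, a near-clique on $\{s_j\}\cup\{a\}$ has degrees exceeding $\Delta$;
keeping $s$ as a vertex conflicts with the exact terminal degrees required in Lemma~\ref{lem:terminal};
for the copies of $K_{\Delta+1}$ minus a matching, you do not say where the $s_j$ and the degree-$2$ or degree-$(\Delta-2)$ terminal go, nor why the result is connected of order at most $d+\Delta$.

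The paper uses circulants for large $d$. For odd $\Delta$ and $d\ge\Delta-1$, it joins $i$ to $i\pm j$, $1\le j\le(\Delta-1)/2$, on $\mathbb Z_{d+1}$ and distinguishes one vertex. For even $\Delta$ and $d\ge\Delta$, it takes the analogous circulant on $\mathbb Z_d$ with $1\le j\le(\Delta-2)/2$ plus the diametral matching $i(i+d/2)$, deletes $d_0/2$ diametral edges, and joins a new vertex to their endpoints. For smaller $d$ it takes complements of sparse graphs on $\Delta+1$ or $\Delta+2$ vertices; for odd $\Delta$ the handshake lemma forces $|V(F)|\equiv d+1\pmod 2$, which is why both orders occur.

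Your fallback of a larger gadget would prove only a weaker inequality than the one stated. Gadgets of order $O_\Delta(d+1)$ would still suffice for Proposition~\ref{prop:lower}, but that too requires a construction valid for every $d$, which you do not give.
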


\begin{proof}
For a terminal of degree \(d\), we construct a replacement graph \(F\)
with a distinguished vertex of degree \(d_0\), exactly \(d\) other vertices
of degree \(\Delta-1\), and all remaining vertices of degree \(\Delta\).
We take \(d_0=\Delta-1\) in the odd
case, and \(d_0=2\) at \(s\), \(d_0=\Delta-2\) at \(t\) in the even case.
Whenever \(\overline F\) is specified below, take \(F\) to be its complement
on the same vertex set; thus
\(d_F(v)=|V(F)|-1-d_{\overline F}(v)\).

For odd \(\Delta\) and \(d<\Delta-1\), construct the complement \(\overline F\)
as follows. If \(d\) is odd, take a matching of \((d+1)/2\) edges and
\(\Delta-d\) isolated vertices. If \(d\) is even, take the disjoint union
of \(C_{d+1}\) and a matching of \((\Delta+1-d)/2\) edges.
For \(d\ge\Delta-1\), take \(V(F)=\mathbb Z_{d+1}\) and join each \(i\)
to \(i\pm j\), \(1\le j\le(\Delta-1)/2\), with indices modulo \(d+1\).
The three constructions have orders \(\Delta+1\), \(\Delta+2\) and
\(d+1\), respectively. By the degree formula above in the first two cases,
and by the cyclic adjacency rule in the third, each has exactly \(d+1\)
vertices of degree \(\Delta-1\), with all other vertices of degree \(\Delta\).
Choose one of the former as the distinguished vertex.

For even \(\Delta\), \(d\) is even. For \(d\le\Delta-2\), construct
\(\overline F\) at \(s\) as the disjoint union of \(K_2\) and a star
\(K_{1,\Delta-1}\) with a matching of \(d/2\) edges added among its leaves;
distinguish the star centre. At \(t\), construct \(\overline F\) as the disjoint
union of \(P_3\),
a matching of \((d-2)/2\) edges and \(\Delta-d\) isolated vertices;
distinguish the internal vertex of \(P_3\). These two constructions have
orders \(\Delta+2\) and \(\Delta+1\), respectively. The degree formula gives
the distinguished vertex degree \(2\) at \(s\) and \(\Delta-2\) at \(t\);
in each case, exactly \(d\) other vertices have degree \(\Delta-1\),
and all remaining vertices have degree \(\Delta\).
For \(d\ge\Delta\), start with the graph on \(\mathbb Z_d\) joining each
\(i\) to \(i\pm j\), \(1\le j\le(\Delta-2)/2\), and to \(i+d/2\),
with indices modulo \(d\). This graph is \((\Delta-1)\)-regular, and the
edges \(i(i+d/2)\), \(0\le i<d/2\), form a perfect matching.
Delete \(d_0/2\) edges of this matching and add the distinguished vertex
adjacent to every vertex incident with a deleted edge. Each original vertex
retains degree \(\Delta-1\), while the new vertex has degree \(d_0\).
This construction has order \(d+1\).

All the constructions are simple. In each complement construction,
\(\overline F\) is disconnected, so \(F\) is connected. The cyclic
constructions are connected because the edges \(i(i+1)\) are retained,
and the added distinguished vertex, when present, has \(d_0>0\) neighbours.
The orders given above imply \(|V(F)|\le d+\Delta\) in every case.

Replace \(s,t\) by vertex-disjoint replacement graphs \(F_s,F_t\), with
distinguished vertices \(a,b\), respectively. At each terminal, reattach its incident edges to distinct
vertices of degree \(\Delta-1\) other than the distinguished vertex. The resulting graph \(G\) is
connected and simple, and all its nonterminals have degree \(\Delta\).
Its terminal degrees are \(\Delta-1,\Delta-1\) for odd \(\Delta\), and
\(2,\Delta-2\) at \(a,b\), respectively, for even \(\Delta\).
Since \(G\) retains the \(N\) nonterminals of \(H\) and replaces \(s,t\)
by \(F_s,F_t\), its order is
\[
n:=|V(G)|=N+|V(F_s)|+|V(F_t)|\le N+d_s+d_t+2\Delta.
\]

Let
\[
\Omega=\{x\in\mathbb R^{V(G)}:x_a=0,\ x_b=1\},
\]
and let \(\Omega'\subseteq\Omega\) consist of the functions that are
constant on each of \(V(F_s)\) and \(V(F_t)\). These functions correspond bijectively, with equal energies, to the
functions on \(H\) taking values \(0\) at \(s\) and \(1\) at \(t\). Hence
\[
\frac1{R_H(s,t)}=\min_{x\in\Omega'}\mathcal E_G(x)
\ge\min_{x\in\Omega}\mathcal E_G(x)=\frac1{R_G(a,b)}.
\]
For odd \(\Delta\), applying \eqref{eq:odd-resistance} to \(G\) gives
\[
R_H(s,t)\le R_G(a,b)\le\frac{n}{\Delta-1}-1
\le\frac{N+d_s+d_t+\Delta+1}{\Delta-1},
\]
which proves \eqref{eq:odd-completion}.
For even \(\Delta\), applying \eqref{eq:even-resistance} to \(G\) gives
\[
R_H(s,t)\le R_G(a,b)\le\frac{n-1}{2(\Delta-2)}
\le\frac{N+d_s+d_t+2\Delta-1}{2(\Delta-2)},
\]
which proves \eqref{eq:even-completion}.
\end{proof}

\section{Proof of Theorem~\ref{thm:main}}\label{sec:proof}

\begin{proposition}\label{prop:comparison} For every fixed integer \(\Delta\ge3\), as \(n\to\infty\),
\begin{equation}
\Delta-\lambda_1(n,\Delta)\le\frac{c_\Delta\pi^2}{4n^2}+O_\Delta(n^{-3}).                \label{eq:comparison}
\end{equation}
\end{proposition}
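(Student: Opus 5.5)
Since $\lambda_1(n,\Delta)\ge\lambda_1(G_{\Delta,m,r})$ for every comparison graph of order $n$, it suffices to show, for each large $n$, that
\[
\Delta-\lambda_1(G_{\Delta,m,r})\le\frac{c_\Delta\pi^2}{4n^2}+O_\Delta(n^{-3})
\]
for some $G_{\Delta,m,r}$ of order $n$. Write $n=(\Delta+1)m+\Delta+2+r$ with $0\le r\le\Delta$, which fixes $m\sim n/(\Delta+1)$. By \eqref{eq:deficit}, any nonzero test vector $y$ bounds $\Delta-\lambda_1$ by its Rayleigh quotient. The plan is to take $y$ to be a discretized quarter-cosine. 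It should be maximal and flat on the right cap, where there is no deficit. It should decay to nearly zero at the left end, where the deficit vertices lie: the terminal of degree $\Delta-1$ or $2$, or the pendant path.

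Step 1 is the construction of $y$. In the odd case, in copy $j$ of $L_\Delta$ put $y_{a_j}=\phi(t_j^-)$, $y_{b_j}=\phi(t_j^+)$, and give the $\Delta-1$ internal vertices the average value. Here $\phi(t)=\sin(\pi t/(2T))$, and positions are measured in units of effective resistance. Each copy of $L_\Delta$ has resistance $2/(\Delta-1)$ between its terminals, and each bridge has resistance $1$, so one period of the chain has resistance $(\Delta+1)/(\Delta-1)$. Put $T=m(\Delta+1)/(\Delta-1)+O(1)$ and $y=\phi(T)=1$ on the cap. On the pendant path use small values, e.g. extend $\phi$ linearly, or simply set $y=0$ there and drop those vertices; a vertex with $y=0$ costs only the edge term, which is $O(T^{-2})$, since the values near the left end are $O(1/T)$. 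In the even case, for $B_{\Delta,2}=K_1+K_2+K_{\Delta-2}+K_1$, the resistance between terminals is $1/2+1/(2(\Delta-2))+1/(2(\Delta-2))=(\Delta-1)/(2(\Delta-2))$ per copy. Assign $y$ on $v_j$ at the resistance positions and on the $K_2$ and $K_{\Delta-2}$ layers by the harmonic (linear in resistance) interpolation.

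Step 2 is the estimate of the numerator. Since $y$ is harmonic within each copy, the edge energy of copy $j$ equals $(\Delta\phi)^2/R_{\mathrm{copy}}$. Summing, and using the same Cauchy--Schwarz/Taylor comparison as in the proof of Lemma~\ref{lem:terminal}, gives $\int_0^T\phi'^2+O(T^{-3})=\pi^2/(8T)+O(T^{-3})$. The deficit term is $\phi(0)^2$ times the deficit, which is $0$ or $O(T^{-2})$ for the path contributions; the path's edge energy is also $O(T^{-2})$. Since $\|y\|^2\asymp T$, those corrections are $O(T^{-3})$ relative.

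Step 3 is the estimate of the denominator. Here $\|y\|_2^2=\sum_v y_v^2$. Each period of length $\rho$ in resistance carries $\Delta+1$ vertices, so the density is $(\Delta+1)/\rho$ vertices per unit resistance, and
\[
\|y\|^2=\frac{\Delta+1}{\rho}\int_0^T\phi^2+O(1)=\frac{n}{T}\cdot\frac T2+O(1).
\]
The quotient is then $\pi^2\rho/(4(\Delta+1))\cdot T^{-1}\cdot(\ldots)$. Doing this carefully gives $\pi^2/(4T\,(n/T))\cdot(1/T)$, i.e. $\pi^2\rho^2/(4(\Delta+1)^2)\cdot 1/m^2\approx c'\pi^2/(4n^2)$. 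The constant is $c'=\rho^2\ldots$: substituting $\rho=(\Delta+1)/(\Delta-1)$ gives $\Delta-1$ in the odd case, and $\rho=(\Delta-1)/(2(\Delta-2))$ with $\Delta+1$ vertices per period gives $2(\Delta-2)$ (after checking against the actual vertex count per copy).

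The main obstacle is bookkeeping. The $O(1)$ vertex offsets (cap, path, endpoint shifts of $T$) must only change the quotient by $O(n^{-3})$, which holds because they perturb $T$ and $\|y\|^2$ by $O(1)$ relative to sizes of order $n$. Matching the per-period vertex density to resistance so that the constant comes out exactly as $c_\Delta$ is the delicate check.
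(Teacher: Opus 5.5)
Your plan matches the paper's proof: the same graphs $G_{\Delta,m,r}$, a quarter-sine test vector parametrized by effective resistance, harmonic extension on each copy, Cauchy--Schwarz to bound the energy by $\int_0^T\phi'^2=\pi^2/(8T)$, and a Riemann-sum estimate $\|y\|_2^2=(\Delta+1)m/2+O(1)$. The problem is the step you yourself call delicate, matching the constant. It is wrong as written, and in the even case it fails. The terminal resistance of $B_{\Delta,2}=K_1+K_2+K_{\Delta-2}+K_1$ is
\[
\frac12+\frac1{2(\Delta-2)}+\frac1{\Delta-2}=\frac{\Delta+1}{2(\Delta-2)},
\]
because the last layer joins $K_{\Delta-2}$ to the right terminal by only $\Delta-2$ edges. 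Your value $(\Delta-1)/(2(\Delta-2))$ is too small. It is not even the value of the sum you displayed, which is $\Delta/(2(\Delta-2))$.

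Your Step~3 algebra is also off. If $\rho$ is the resistance of one period and $T=m\rho$, the Rayleigh quotient is at most
\[
\frac{\pi^2/(8m\rho)}{(\Delta+1)m/2+O(1)}
=\frac{\pi^2}{4(\Delta+1)\rho m^2}+O(m^{-3})
=\frac{(\Delta+1)\pi^2}{4\rho n^2}+O(n^{-3}),
\]
so the constant is $c'=(\Delta+1)/\rho$. Your displayed $\pi^2\rho^2/(4(\Delta+1)^2m^2)$ would give $c'=\rho^2$, which is not $\Delta-1$ even in the odd case.

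Each copy of $B_{\Delta,2}$ contributes exactly $\Delta+1$ new vertices. So your $\rho$ leads to $c'=2(\Delta-2)(\Delta+1)/(\Delta-1)>2(\Delta-2)$, a strictly weaker bound than the proposition, and recounting vertices cannot repair this. With $\rho=(\Delta+1)/(\Delta-1)$ in the odd case and $\rho=(\Delta+1)/(2(\Delta-2))$ in the even case, you get $c'=(\Delta+1)/\rho=c_\Delta$ in both cases. This is why the paper sets $\ell=(\Delta+1)/c_\Delta$ and $T=m\ell$. After these corrections your argument coincides with the paper's.

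The rest of your bookkeeping is sound. Additive $O(T^{-2})$ changes to the numerator contribute only $O(n^{-3})$ after division by $\|y\|_2^2\asymp n$. This covers placing the left end at or near $\phi(0)=0$, the pendant path, and $O(1)$ shifts of $T$.
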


\begin{proof}
For sufficiently large \(n\), choose integers \(m,r\) such that
\begin{equation}
n=(\Delta+1)m+\Delta+2+r,\qquad m\ge1,\qquad0\le r\le\Delta. \label{eq:order}
\end{equation}
Take \(G=G_{\Delta,m,r}\) from Section~\ref{sec:preliminaries}.
By the construction and \eqref{eq:order}, \(G\in\mathcal G^*(n,\Delta)\).

First let \(\Delta\) be odd. The effective resistance between the terminals of
\(L_\Delta\) is \(2/(\Delta-1)\): with boundary values \(0,1\), the energy
minimizer has value \(1/2\) at every nonterminal by symmetry and
harmonicity, and its energy is \((\Delta-1)/2\).

For even \(\Delta\ge4\), consider one copy of \(B_{\Delta,2}\) with
terminals \(a,b\). With boundary values \(0,1\), the energy minimizer is
constant on each middle clique by symmetry and uniqueness. Identifying the
vertices of each of these cliques to a single vertex and deleting the resulting
loops preserves the minimum energy. The resulting multigraph has four vertices
\(a,u,v,b\), with edge multiplicities \(2\), \(2(\Delta-2)\), and \(\Delta-2\)
on \(au,uv,vb\), respectively, and no other edges.
The series law for effective resistance~\cite{DoyleSnell} therefore gives
\[
R_{B_{\Delta,2}}(a,b)=\frac12+\frac1{2(\Delta-2)}+\frac1{\Delta-2}
=\frac{\Delta+1}{2(\Delta-2)}.
\]

Recall that \(c_\Delta=\Delta-1\) for odd \(\Delta\) and
\(c_\Delta=2(\Delta-2)\) for even \(\Delta\), and put
\(\ell=(\Delta+1)/c_\Delta\), \(T=m\ell\).
In the odd case, \(\ell=1+2/(\Delta-1)\); in the even case,
\(\ell=R_{B_{\Delta,2}}(a,b)\).

Let \(g:[0,T]\to\mathbb R\)
be given by \(g(t)=\sin(\pi t/(2T))\).
Define \(y=(y_v)_{v\in V(G)}\in\mathbb R^{V(G)}\) by setting, for \(1\le j\le m\),
\[
y_{a_j}=g((j-1)\ell+1),\qquad y_{b_j}=g(j\ell)
\]
in the odd case. In the even case, set \(y_{v_j}=g(j\ell)\) for
\(0\le j\le m\); in particular, \(y_{v_0}=0\).
On each copy of \(L_\Delta\) for odd \(\Delta\), or of \(B_{\Delta,2}\)
for even \(\Delta\), choose the nonterminal values to minimize the Dirichlet
energy subject to the prescribed terminal values. By the Dirichlet principle,
this gives the harmonic extension on that copy.
Set \(y_v=1=g(T)\) on the right cap, agreeing with the value at the right
terminal of the chain, and \(y_v=0\) on the added path vertices, if any.
All edges within the cap or joining it to the chain, as well as those within
the added path, therefore contribute zero to \eqref{eq:deficit}.
In the odd case, the degree-deficit term at \(a_1\) when \(r=0\), or the edge
joining \(a_1\) to the added path when \(r>0\), contributes exactly
\(g(1)^2=(g(1)-g(0))^2\), representing the initial unit interval.
Every other vertex of degree less than \(\Delta\) has value zero,
so the remaining degree-deficit terms vanish.

For \(0\le s<t\le T\), the fundamental theorem of calculus and the
Cauchy--Schwarz inequality give
\[
\begin{aligned}
(g(t)-g(s))^2
&=\left(\int_s^t g'(x)\,dx\right)^2\\
&\le\left(\int_s^t 1^2\,dx\right)
     \left(\int_s^t g'(x)^2\,dx\right)
=(t-s)\int_s^t g'(x)^2\,dx.
\end{aligned}
\]
Dividing by \(t-s>0\), we obtain
\((g(t)-g(s))^2/(t-s)\le\int_s^t g'(x)^2\,dx\).
In the odd case, \([0,T]\) is partitioned into the initial interval \([0,1]\),
the \(m-1\) intervals corresponding to the edges \(b_j a_{j+1}\), each of
length \(1\), and the \(m\) intervals corresponding to copies of \(L_\Delta\),
each of length \(2/(\Delta-1)\). In the even case, the intervals
\([(j-1)\ell,j\ell]\), \(1\le j\le m\), correspond to the copies of
\(B_{\Delta,2}\) and partition \([0,T]\).
If a copy \(J\), with terminals \(a,b\), corresponds to \([s,t]\), then
\(R_J(a,b)=t-s\). Let \(u\) be its harmonic function with terminal values
\(0,1\). The chosen extension is
\(y_v=g(s)+(g(t)-g(s))u_v\), so \eqref{eq:harmonic} gives
\[
\mathcal E_J(y)=(g(t)-g(s))^2\mathcal E_J(u)
=\frac{(g(t)-g(s))^2}{R_J(a,b)}
=\frac{(g(t)-g(s))^2}{t-s}.
\]
The same expression gives the contributions of the unit intervals described
above. Writing \(\mathcal P\) for this partition and summing, we obtain
\begin{equation}
\begin{aligned}
y^{\mathsf T}(\Delta I-A(G))y
&=\sum_{[s,t]\in\mathcal P}\frac{(g(t)-g(s))^2}{t-s}\\
&\le\sum_{[s,t]\in\mathcal P}\int_s^t g'(x)^2\,dx
 =\int_0^T g'(x)^2\,dx\\
&=\frac{\pi^2}{4T^2}\int_0^T\cos^2\!\left(\frac{\pi x}{2T}\right)\,dx
 =\frac{\pi^2}{8m\ell}.
\end{aligned}                                                     \label{eq:trial-energy}
\end{equation}

From each of the \(m\) copies in the chain, count all vertices when \(\Delta\)
is odd, and all vertices except its left terminal when \(\Delta\) is even.
In either case, each copy contributes \(\Delta+1\) vertices,
and the counted sets are disjoint. By the maximum principle and monotonicity
of \(g\), the values in the \(j\)-th copy lie between
\(g((j-1)\ell)\) and \(g(j\ell)\).
The function \(h(s)=\sin^2(\pi s/2)\) is increasing on \([0,1]\). Its left and
right Riemann sums on the uniform partition into \(m\) intervals differ by
\(1/m\) and bound \(\int_0^1 h(s)\,ds=1/2\). Thus the sum of \(y_v^2\) over the counted
vertices differs from \((\Delta+1)m/2\) by at most \(\Delta+1\).
The remaining \(\Delta+2+r\le2(\Delta+1)\) coordinates lie in \([0,1]\), so
\begin{equation}
\left|\|y\|_2^2-\frac{(\Delta+1)m}{2}\right|\le3(\Delta+1).              \label{eq:trial-norm}
\end{equation}
The vector is nonzero, since the cap
has value one. Equations \eqref{eq:trial-energy}--\eqref{eq:trial-norm} and the Rayleigh principle give
\[
\Delta-\lambda_1(G)\le
\frac{\pi^2}{4(\Delta+1)\ell m^2}+O_\Delta(m^{-3})
=\frac{c_\Delta\pi^2}{4n^2}+O_\Delta(n^{-3}),
\]
because \((\Delta+1)m=n-\Delta-2-r\) and \(0\le r\le\Delta\). Since this graph lies in
\(\mathcal G^*(n,\Delta)\), \eqref{eq:comparison} follows. \end{proof}

\begin{proposition}\label{prop:lower}
For every fixed integer \(\Delta\ge3\), as \(n\to\infty\),
\[
\Delta-\lambda_1(n,\Delta)\ge
\frac{c_\Delta\pi^2}{4n^2}-O_\Delta(n^{-5/2}).
\]
\end{proposition}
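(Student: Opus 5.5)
Fix \(G\in\mathcal G^*(n,\Delta)\) and let \(x>0\) be its Perron vector, normalized so that \(\max_v x_v=1\). Write \(\mu=\Delta-\lambda_1(G)\). By \eqref{eq:deficit},
\[
\mu\|x\|_2^2=\sum_{uv\in E(G)}(x_u-x_v)^2+\sum_v(\Delta-d_G(v))x_v^2 .
\]
We may assume \(\mu\le C/n^2\), since otherwise there is nothing to prove. The plan is to compare \(\|x\|_2^2\) with the energy through a one-dimensional reduction. Let \(w\) be a vertex with \(x_w=1\). Let \(D\) be the set of deficient vertices, which is nonempty, and let \(x_{\min}=\min_{v\in D} x_v\). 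First we show that \(x\) is small somewhere. Summing the eigen-equation gives
\[
\mu\sum_v x_v=\sum_v(\Delta-d_G(v))x_v,
\]
so \(x_{\min}\le \mu\|x\|_1\le \mu n=O(1/n)\). In fact this bound is too weak to help directly, and we will instead use a vertex \(z\in D\) where the deficit term is charged.

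The main step is level sets. For \(0\le \theta<1\), let \(S_\theta=\{v:x_v>\theta\}\). Treat the function \(\phi(\theta)=|S_\theta|\), which is decreasing, and relate the edge boundary of level sets to resistances. Concretely, pick thresholds \(0=\theta_0<\theta_1<\dots<\theta_K=1\). For each slab we contract \(\{x\le\theta_{i-1}\}\) to a terminal \(s\) and \(\{x\ge\theta_i\}\) to a terminal \(t\). This gives a network \(H_i\) whose nonterminals are the \(N_i\) vertices with values in \((\theta_{i-1},\theta_i)\), and whose terminal degrees are the numbers of edges leaving the slab. Nonterminals in \(H_i\) may have degree below \(\Delta\) if they are deficient in \(G\). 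To fix this, we attach each deficiency to a grounded extra terminal edge: we treat deficient vertices as joined to \(s\) by \(\Delta-d(v)\) parallel edges, which is consistent with the deficit term acting like an edge to value \(0\). Then Lemma~\ref{lem:completion} gives
\[
\sum_{\text{slab }i}(\text{energy})\ge \frac{(\theta_i-\theta_{i-1})^2 c_\Delta/2\cdots}{N_i+d_s+d_t+O(1)}
\]
after multiplying by \(2\) appropriately, i.e.\ \(R_{H_i}\le (N_i+d_s+d_t+O(1))/c'_\Delta\), where \(c'_\Delta=\Delta-1\) or \(2(\Delta-2)\). In the even case we need \(d_s,d_t\) even. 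Parity is guaranteed because all nonterminal degrees are even, the cut parity is controlled, and when needed we double all edges, adjusting constants.

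The obstacle is the boundary terms \(d_s,d_t\), which count cut edges. We choose the thresholds randomly within windows, or by averaging. Since \(\int_0^1 |\partial S_\theta|\,d\theta\le \sum_{uv}|x_u-x_v|\le \sqrt{\Delta n\cdot \mu\|x\|^2}\), which is \(O(n^{1/2})\)-small relative to \(n\), we can choose \(K\approx n^{1/2}\) slabs with cut sizes summing to \(O(n^{1/2}\cdot\text{small})\). Then \(\sum_i (N_i+d_s+d_t+O(1))\le n+O(n^{1/2})\), which is the source of the \(n^{-5/2}\) error.

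Finally comes the Wirtinger step. We define the piecewise linear function \(F\) on \([0,L]\), with \(L=\sum_i\) (slab resistance bound) \(\le (n+O(\sqrt n))/c_\Delta\cdot(\text{odd/even normalization})\), interpolating \(\theta_i\). We obtain \(\mathcal E\ge\int F'^2\) by Cauchy--Schwarz in reverse, slabwise. The norm satisfies \(\|x\|_2^2\le\) (vertex counts in slabs) \(\times\theta_i^2\), which is approximately \(c_\Delta\int F^2\,dt\) up to \(O(n^{1/2})\) discretization. Wirtinger with \(F(0)\approx0\) (the grounded end) and \(F'(L)\) free gives \(\int F'^2\ge(\pi/2L)^2\int F^2\). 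This yields
\[
\mu\ge \frac{\pi^2 c_\Delta}{4n^2}\bigl(1-O(n^{-1/2})\bigr).
\]
The hardest part is making the parity and cut-size bookkeeping for the even case work simultaneously with the random threshold choice; first I would try choosing thresholds that avoid values of \(x\), so every vertex lies strictly inside some slab.
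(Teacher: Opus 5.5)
Your outline has the same architecture as the paper's proof: grounding the deficits, contracting each slab to two terminals, Lemma~\ref{lem:completion}, averaged levels with $K\approx\sqrt n$, piecewise-linear interpolation, and Wirtinger with one fixed end. However, two steps are missing and fail as written.

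The first is the energy bookkeeping across slabs: you never say which function on $H_i$ the resistance bound is applied to. The vector $x$ is not constant on the contracted sets. Moreover, an edge whose endpoint values straddle several slabs belongs to several $H_i$, so charging each slab with $\sum_{uv\in E(H_i)}(x_u-x_v)^2$ counts such edges repeatedly, and nothing controls the energy they carry. The missing device is clipping. Set $x^{(i)}_v=\min\{\delta_i,\max\{0,x_v-\theta_{i-1}\}\}$, with value $0$ at the ground. It equals $0$ and $\delta_i=\theta_i-\theta_{i-1}$ on the two contracted sets, so \eqref{eq:resistance} applies in $H_i$. On every edge, grounding edges included, the increments of the $x^{(i)}$ are nonnegative with sum at most $|x_u-x_v|$. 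Hence $\sum_i\mathcal E(x^{(i)})\le\mu\|x\|_2^2$ with no loss.

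The second gap is at the extreme levels. With $\theta_0=0$ and $x>0$, every grounding edge is incident with $s$ in the first slab, so there $d_s=\sum_v(\Delta-d_G(v))$ is the entire degree deficit. With $\theta_K=1$, $d_t$ counts all edges leaving the set of maximum vertices. The averaging controls neither, and nothing in your argument bounds them by $O(\sqrt n)$. Also, Wirtinger needs $F(0)=0$ exactly, not $F(0)\approx0$. The fix has three parts. Choose $\theta_0\in(0,\eta)$ and $\theta_K\in(1-\eta,1)$ by the same averaging, with $\eta=1/(K+1)$. Put the vertices below $\theta_0$ (above $\theta_K$) into the first (last) layer. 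Interpolate the shifted levels $\theta_i-\theta_0$, so that $F(0)=0$ at the cost of an additive $O(n\eta)$ error in $\|x\|_2^2$.

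Turning this, and your discretization error, into a relative $O(n^{-1/2})$ error needs $\mu\|x\|_2^2\ge1/n$, which you omit; it follows from Cauchy--Schwarz along a path from the ground to a maximum vertex. Note also that the total variation is $O(1)$, not merely $O(\sqrt n)$, since $\mu\|x\|_2^2\le C/n$. This is what makes the averaged cuts sum to $O(K)$.

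Finally, drop the fallback of doubling all edges. It gives nonterminals degree $2\Delta$ and parallel edges, violating Lemma~\ref{lem:completion}. It is also unnecessary: every original vertex has degree $\Delta$ in the grounded network, so both terminal degrees are even, since each is the edge boundary of a set of original vertices. They are positive by connectivity.
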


\begin{proof}
Let \(G\in\mathcal G^*(n,\Delta)\) satisfy \(\lambda_1(G)=\lambda_1(n,\Delta)\), and put
\(\mu=\Delta-\lambda_1(G)\). Let \(f=(f_v)_{v\in V(G)}>0\) be a Perron vector
of \(G\), normalized by \(\max_{v\in V(G)}f_v=1\).
Since \(G\) is connected and nonregular, \(\mu>0\).

Adjoin a ground vertex \(o\) and \(\Delta-d_G(v)\) parallel edges from
each \(v\in V(G)\) to \(o\), obtaining a network \(\widehat G\) with unit edge resistances.
Extend \(f\) to \(V(\widehat G)\) by setting \(f_o=0\).
The network is connected and loopless; every original vertex now
has degree \(\Delta\). From \eqref{eq:deficit}, with
\(S=\sum_{v\in V(G)}f_v^2\) and \(Q=\mathcal E_{\widehat G}(f)\),
\begin{equation}
Q=\mu S.                                                        \label{eq:ground-energy}
\end{equation}
By \eqref{eq:comparison} and \(S\le n\), \(Q\le A_\Delta/n\) for some constant \(A_\Delta>0\)
and all sufficiently large \(n\). A simple path from \(o\) to a
vertex \(v\) with \(f_v=1\) has length at most \(n\). Applying the
Cauchy--Schwarz inequality along this path gives \(1\le nQ\). Also
\(e(\widehat G)=\Delta n-e(G)\le \Delta n\). Consequently
\begin{equation}
\frac1n\le Q\le\frac{A_\Delta}{n},\qquad
\mathcal V:=\sum_{uv\in E(\widehat G)}|f_u-f_v|
\le\sqrt{\Delta nQ}\le M_\Delta:=\sqrt{\Delta A_\Delta}.                              \label{eq:variation}
\end{equation}

Fix an integer \(K\ge2\) and set \(\eta=1/(K+1)\). For \(t\in(0,1)\), let
\(b(t)\) count the edges \(uv\) of \(\widehat G\), with multiplicity, such that
\(\min\{f_u,f_v\}<t<\max\{f_u,f_v\}\).
Each edge \(uv\) contributes to this count on the open interval between
\(f_u\) and \(f_v\), and thus
\[
\int_0^1b(t)\,dt=\mathcal V.
\]
In each open interval \((i\eta,(i+1)\eta)\), \(0\le i\le K\),
choose a level \(\tau_i\) different from every vertex value such that
\(b(\tau_i)\le\eta^{-1}\int_{i\eta}^{(i+1)\eta}b(t)\,dt\).
Since \(b(t)\) is piecewise constant, such a level exists. We obtain
\begin{equation}
\begin{gathered}
\sum_{i=0}^Kb(\tau_i)\le(K+1)M_\Delta,\qquad
0<\tau_0<\eta,\qquad \tau_K>1-\eta,\\
0<\delta_i:=\tau_i-\tau_{i-1}<2\eta\qquad(1\le i\le K).
\end{gathered}                            \label{eq:levels}
\end{equation}
For every \(t\in(0,1)\) that is not a vertex value, \(b(t)>0\), since the cut separates \(o\)
from a vertex of value one in the connected network \(\widehat G\).

For \(1\le i\le K\), define \(f^{(i)}\in\mathbb R^{V(\widehat G)}\) by
\[
f^{(i)}_v=\min\{\delta_i,\max\{0,f_v-\tau_{i-1}\}\}
\qquad(v\in V(\widehat G)).
\]
For each edge \(uv\), choose the labels \(u,v\) so that \(f_u\le f_v\).
The differences \(f^{(i)}_v-f^{(i)}_u\) are nonnegative and sum to at most
\(f_v-f_u\), since the intervals \([\tau_{i-1},\tau_i]\) have disjoint interiors.
The sum of their squares is therefore at most \((f_v-f_u)^2\).
Summing over all edges gives
\begin{equation}
Q\ge\sum_{i=1}^K\mathcal E_{\widehat G}(f^{(i)}).                     \label{eq:clipping}
\end{equation}
For each \(i\), identify the vertices satisfying \(f_v\le\tau_{i-1}\) to form
a terminal \(s_i\), and those satisfying \(f_v\ge\tau_i\) to form a terminal
\(t_i\). Discard loops and retain parallel edges.
Both sets are nonempty and disjoint. Let \(H_i\) be the resulting
network and \(m_i\) its number of nonterminals. It is connected;
every nonterminal has degree \(\Delta\), and no two nonterminals are joined
by parallel edges, since they are original vertices of the simple graph \(G\).
The degrees of \(s_i,t_i\) are \(b(\tau_{i-1}),b(\tau_i)\), respectively,
including edges crossing both levels.

When \(\Delta\) is even, the degree of \(o\) is \(\Delta n-2e(G)>0\) and even,
and every other vertex of \(\widehat G\) has degree \(\Delta\). The edge boundary of any
vertex set in this network has even size, because its size is the sum of the
degrees in the set minus twice its internal edge count. Thus the terminal
degrees in \(H_i\) are positive and even. Set
\[
c=c_\Delta,\qquad \kappa=\kappa_\Delta=
\begin{cases}\Delta+1,&\Delta\text{ odd},\\2\Delta-1,&\Delta\text{ even}.
\end{cases}
\]
Applying Lemma~\ref{lem:completion} to \(H_i\) gives
\[
R_{H_i}(s_i,t_i)\le
\frac{m_i+b(\tau_{i-1})+b(\tau_i)+\kappa}{c}.
\]
The vector \(f^{(i)}\) is constant on each of the two identified vertex sets, so it
defines a vertex function on \(H_i\) with values \(0\) at \(s_i\) and
\(\delta_i\) at \(t_i\). The induced function has the same energy as
\(f^{(i)}\): edges within either set contribute zero, and all remaining
edges are retained with their multiplicities. By \eqref{eq:resistance} and
the preceding bound,
\begin{equation}
\mathcal E_{\widehat G}(f^{(i)})
\ge\frac{c\delta_i^2}{m_i+b(\tau_{i-1})+b(\tau_i)+\kappa}.             \label{eq:layer-energy}
\end{equation}

Assign each \(v\in V(G)\) to layer \(i\) if \(\tau_{i-1}<f_v<\tau_i\).
Assign the remaining vertices of \(G\) to layer 1 if \(f_v<\tau_0\), and to
layer \(K\) if \(f_v>\tau_K\). Let \(n_i\) be the number of vertices assigned
to layer \(i\), so \(n_i\ge m_i\) and \(\sum_{i=1}^K n_i=n\).
Set
\begin{equation}
\begin{gathered}
w_i=n_i+b(\tau_{i-1})+b(\tau_i)+\kappa,\qquad L=\sum_{i=1}^Kw_i,\\
n\le L\le n+2M_\Delta(K+1)+\kappa K.
\end{gathered}                                                       \label{eq:length}
\end{equation}
Each \(w_i\) is positive. The upper bound for \(L\) follows from
\eqref{eq:levels}, since each boundary count occurs at most twice.

Partition \([0,L]\) into consecutive intervals of lengths \(w_1,\ldots,w_K\).
Let \(F:[0,L]\to\mathbb R\) be continuous and linear on each interval,
with values \(\tau_{i-1}-\tau_0\) and \(\tau_i-\tau_0\) at the left and right
endpoints of the \(i\)-th interval, respectively. Thus \(F(0)=0\), and
\eqref{eq:clipping}--\eqref{eq:layer-energy} give
\begin{equation}
\int_0^L F'(x)^2\,dx=\sum_{i=1}^K\frac{\delta_i^2}{w_i}\le\frac Qc.   \label{eq:interpolation}
\end{equation}
For \(1\le i\le K\), put \(\alpha_i=\tau_{i-1}-\tau_0\), the minimum of \(F\)
on the \(i\)-th interval. Every vertex \(v\) assigned to layer \(i\)
satisfies \(f_v\ge\alpha_i\): for \(i=1\), \(\alpha_1=0\), and for \(i>1\),
\(f_v>\tau_{i-1}\). If \(i<K\), then \(f_v<\tau_i\), giving
\(f_v-\alpha_i<\delta_i+\tau_0<3\eta\). In the last layer,
\(f_v-\alpha_K\le1-\tau_{K-1}+\tau_0<3\eta\).
Since \(f_v,\alpha_i\in[0,1]\), we obtain \(f_v^2\le\alpha_i^2+6\eta\).
Also \(F\ge\alpha_i\) on the \(i\)-th interval and \(w_i\ge n_i\), so
\begin{equation}
S\le\sum_{i=1}^K n_i\alpha_i^2+6n\eta
\le\int_0^L F(x)^2\,dx+\frac{6n}{K+1}.                            \label{eq:norm-comparison}
\end{equation}

Since \(F(0)=0\), applying Wirtinger's inequality in the form
\cite[\S7.7, (7.7.1), p.~184]{HLP} to
\(u(t)=F(2Lt/\pi)\), \(0\le t\le\pi/2\), gives
\begin{equation}
\int_0^L F(x)^2\,dx\le\frac{4L^2}{\pi^2}\int_0^L F'(x)^2\,dx.      \label{eq:wirtinger}
\end{equation}

Combining \eqref{eq:ground-energy}, \eqref{eq:variation}, and \eqref{eq:interpolation}--\eqref{eq:wirtinger}, we obtain
\begin{equation}
\frac1\mu=\frac SQ
\le\frac{4L^2}{c\pi^2}+\frac{6n}{(K+1)Q}
\le\frac{4L^2}{c\pi^2}+\frac{6n^2}{K+1}.                        \label{eq:reciprocal}
\end{equation}
Take \(K=\lceil\sqrt n\rceil\). By \eqref{eq:length}, \(L/n=1+O_\Delta(n^{-1/2})\),
so substitution into \eqref{eq:reciprocal} gives
\[
\frac1{\mu n^2}\le\frac4{c_\Delta\pi^2}+O_\Delta(n^{-1/2}).
\]
Taking reciprocals and dividing by \(n^2\) gives
\[
\mu\ge\frac{c_\Delta\pi^2}{4n^2}-O_\Delta(n^{-5/2}).
\]
\end{proof}

Theorem~\ref{thm:main} follows from Propositions~\ref{prop:comparison} and~\ref{prop:lower}.

\section*{Acknowledgments}

During the preparation of this paper, OpenAI's GPT-6 was consulted for discussions of possible proof approaches and for suggestions on the exposition. These suggestions were considered and refined by the authors in developing the mathematical arguments. The authors have independently verified all proofs in the manuscript and take full responsibility for its correctness and content.

\bibliographystyle{abbrv}
\bibliography{refs}
\end{document}